\def\({\left(}
\def\){\right)}
\def\Nx{\nabla_x}
\def\Cal{\mathcal}
\def\eb{\varepsilon}
\newcommand{\be}{\begin{equation} }
\newcommand{\ee}{\end{equation} }
\def \and{\qquad\text{and}\qquad}
\def\Bbb{\mathbb}
\def\Dt{\partial_t}
\def\Dx{\Delta_x}
\def\({\left(}
\def\){\right)}
\def\Nx{\nabla_x}
\def\divv{\operatorname{div}}
\def\eb{\varepsilon}
\def\Cal{\mathcal}
\def\eb{\varepsilon}
\def\R {\mathbb R}
\def\<{\left<}
\def\>{\right>}
\def \and{\qquad\text{and}\qquad}
\def\Bbb{\mathbb}
\def\Dt{\partial_t}
\def\Dx{\Delta_x}
\def\R {\mathbb R}
\newtheorem{proposition}{Proposition}[section]
\newtheorem{theorem}[proposition]{Theorem}
\newtheorem{corollary}[proposition]{Corollary}
\newtheorem{lemma}[proposition]{Lemma}
\theoremstyle{definition}
\newtheorem{definition}[proposition]{Definition}
\newtheorem{remark}[proposition]{Remark}
\newtheorem{example}[proposition]{Example}
\newtheorem{problem}[proposition]{Problem}
\numberwithin{equation}{section}
\def\be{\begin{equation}}
\def\ee{\end{equation}}
\def\bp{\begin{proof}}
\def\ep{\end{proof}}
\def \au {\rm}
\def \bk {\it}
\def \no#1#2#3 {{\bf #1} (#3), #2.}
\def \eds#1#2#3 {#1, #2, #3.}
\title[Determining functionals for dissipative PDEs]
{Determining functionals and finite-dimensional reduction for dissipative PDEs revisited}
\author[V. Kalantarov, A. Kostianko,  and  S. Zelik]
{Varga Kalantarov${}^0$,  Anna Kostianko${}^{1,2}$, and Sergey Zelik${}^{1,3,4}$}
\address{${}^0$ Department of mathematics,
\newline\indent Ko{\c c} University, Rumelifeneri Yolu, Sariyer, Istanbul, Turkey}
\address{${}^1$ \phantom{e}School of Mathematics and Statistics, Lanzhou University, Lanzhou\\ 730000,
P.R. China}
\address{${}^2$  Imperial College, London SW7 2AZ, United Kingdom.}
\address{${}^3$ University of Surrey, Department of Mathematics, Guildford, GU2 7XH, United Kingdom.}
\address{${}^4$ Keldysh Institute of Applied Mathematics, Moscow, Russia}
\email{a.kostianko@imperial.ac.uk}
\email{s.zelik@surrey.ac.uk}
\begin{document}

\begin{abstract}  { We study } the properties of linear and non-linear determining functionals for dissipative dynamical systems generated by PDEs. The main attention is payed to the lower bounds for the number of such functionals. In contradiction to the common paradigm, it is shown that the optimal number of determining functionals (the so-called determining dimension) is strongly related to the proper dimension of the set of equilibria of the considered dynamical system rather than to the dimensions of the global attractors and the complexity of the dynamics on it. In particular, in the generic case  where the set of equilibria is finite, the determining dimension equals to one (in complete agreement with the Takens delayed embedding theorem) no matter how complex the underlying dynamics is. The obtained results are illustrated by a number of explicit examples.
\end{abstract}

\subjclass[2010]{35B40, 35B42, 37D10, 37L25}
\keywords{Determining functionals, finite-dimensional reduction, Takens delay embedding theorem}
\thanks{ This work is partially supported by the grant 19-71-30004 of RSF (Russia),
  and the Leverhulme grant No. RPG-2021-072 (United Kingdom) }
\maketitle
\tableofcontents

\section{Introduction}\label{s0}
It is believed that in many cases the limit dynamics generated by dissipative PDEs is essentially finite-dimensional and can be effectively described by finitely many parameters (the so-called order parameters in the terminology of I. Prigogine) governed by a system of ODEs describing its evolution in time (which is usually referred as an Inertial Form (IF) of the system considered), see \cite{BV92,CV02,ha,MZ08,R01,SY02,T97} and references therein. A mathematically rigorous interpretation of this conjecture naturally leads to the concept of an Inertial Manifold (IM). By definition, IM is an invariant smooth (at least Lipschitz) finite-dimensional  submanifold in the phase space  which is exponentially stable and possesses the so-called exponential tracking property, see \cite{CFNT89,FST88,M-PS88,M91,R94,Z14} and references therein. However, the existence of such an object requires rather restrictive extra assumptions on the considered system, therefore, an IM may a priori not exist for many interesting equations arising in applications, see \cite{EKZ13,KZ18,KZ17,Z14} for more details. In particular, the existence or non-existence of an IM for 2D Navier-Stokes remains an open problem and for the simplified models of 1D coupled Burgers type systems such a manifold may not exist, see \cite{KZ18,KZ17,Z14}.
\par
By this reason, a number of weaker interpretations of the finite-di\-men\-sio\-na\-lity conjecture are intensively studied during the last 50 years.
One of them is related to the concept of a global attractor and interprets its finite-dimensionality in terms of Hausdorff or/and box-counting dimensions keeping in mind   the Man\'e projection theorem. Indeed, it is well-known that under some relatively weak assumptions
 the dissipative system considered   possesses a compact global attractor of  finite box-counting dimension, see \cite{BV92,MZ08,R01,T97} and references therein. { Then the}  corresponding IF can be constructed   by projecting the considered dynamical system to a generic plane of sufficiently large, but finite dimension.  The key drawback of this approach is related to the fact that the obtained IF is only H\"older continuous (which is not enough in general even for the uniqueness of solutions of the reduced system of ODEs). Moreover, as recent examples show, the limit dynamics may remain in a sense infinite-dimensional despite the existence of a H\"older continuous IF (at least, it may demonstrate some features, like super-exponentially attracting limit cycles, traveling waves in Fourier space, etc., which are impossible in classical dynamics), see \cite{Z14} for more details.
\par
An alternative even more weaker approach (which actually has been historically the first, see \cite{FP67,Lad72}) is related to the concept of {\it determining functionals}. By definition, a system of (usually linear) continuous functionals $\Cal F:=\{F_1,\cdots,F_N\}$ on the phase space $H$ is called (asymptotically) determining if for any two trajectories $u_1(t)$ and $u_2(t)$, the convergence
$$
F_n(u_1(t))-F_n(u_2(t))\to0 \ \text{ as $t\to\infty$ for all $n=1,\cdots,N$}
$$
 implies that $u_1(t)\to u_2(t)$ in $H$ as $t\to\infty$.   Thus, in the case where $\Cal F$ exists, the limit behavior of a trajectory $u(t)$ as $t\to\infty$ is uniquely determined by the behaviour of finitely many scalar quantities $F_1(u(t))$, $\cdots$, $F_N(u(t))$, see e.g. \cite{Chu} for more details.
\par
To be more precise, it has been shown in \cite{FP67,Lad72} that, for the case of  Navier-Stokes equations in 2D, a system generated by  first $N$ Fourier modes is asymptotically determining if $N$ is large enough.
Later on the notions of determining nodes, volume elements, etc., have been introduced and various upper bounds for the number $N$ of elements in such systems have been obtained for various   dissipative PDEs (see, e.g., \cite{Chu1,FMRT, FT, FTT,JT} and references therein). More general classes of determining functionals have been introduced in \cite{Cock1,Cock2}, see also \cite{Chu,Chu1}.
\par
We emphasize  from the very beginning that the existence of the finite system $\Cal F$ of determining functionals
{\it does not} imply in general that the quantities $F_n(u(t))$, $n\in1,\cdots,N$ obey
 a system of ODEs, therefore, this approach does not a priori justify   the finite-dimensionality conjecture. Moreover, these quantities usually obey only the delay differential equations (DDEs) whose phase space remain infinite-dimensional, see Section \ref{s2} below. Nevertheless, they may be useful for many purposes, for instance, for establishing the controllability of an initially infinite dimensional system by finitely many modes (see e.g. \cite{AT14}), verifying the uniqueness of an invariant measure for random/stochasitc PDEs (see e.g. \cite{kuksin}), etc.  We also mention more recent but very promising applications of determining functionals to data assimilation problems where the values of functionals $F_n(u(t))$ are interpreted as the results of observations and the theory of determining functionals allows us to build new methods of restoring the trajectory $u(t)$ by the results of observations, see \cite{AT14,AT13,OT08,OT03} and references therein.
\par
It {  is } worth to note here that, despite the long history and big number of research papers and interesting results, the determining functionals are essentially less understood in comparison with IMs and global attractors. Indeed, the general strategy which most  
 of the papers in the field follow on is to fix some very specific class of functionals (like Fourier modes or the values of $u$ in some prescribed points in  space (the so-called nodes), etc.) and to give as accurate as possible {\it upper} bounds for the number of such functionals which is sufficient to generate a determining system. These upper bounds are then expressed in terms of physical parameters of the considered system, e.g. in terms of the so-called Grashof's number when 2D Navier-Stokes equations are considered, see \cite{Chu,OT08} for details. These estimates often give the bounds for the number $N$ which are compatible to the box-counting dimension of the global attractor.
However,
the questions of sharpness of these upper bounds and of   finding the appropriate lower bounds remain completely unstudied. The only natural lower bound for $N$ which is available is related to the dimension of the set of equilibria for the considered system.
\par
 An exception is the case of one spatial variable, where the determining systems of very few number of functionals (which is compatible with the dimension of the equilibria set) are known, see e.g. \cite{kukavica}. However, these 1D results somehow support the Takens conjecture (see \cite{Tak})  that the number of determining functionals should be very small (a generic single functional should be determining for a generic system whose set of equilibria is finite by the Sard theorem) and clearly contradict the conjecture that the above mentioned estimates (which are compatible with the dimension of the attractor) are sharp. We also mention that the restriction to the class of linear determining functionals looks artificial especially when a non-linear dissipative system is considered.
\par
The main aim of the present paper is to shed some light on the above mentioned questions. For simplicity, we restrict ourselves by considering only the following semilinear abstract parabolic equation in a Hilbert space~$H$:
\begin{equation}\label{0.abs}
\Dt u+Au-f(u)=g,
\end{equation}
where $A:D(A)\to H$ is a linear self-adjoint positive operator with compact inverse, $f$ is a given nonlinearity which is  globally Lipschitz and subordinated to  the linear part $A$ of the equation and $g$ is a given external force, see Section \ref{s2} for precise conditions.
\par
One of the main results of the manuscript is the following theorem,
\begin{theorem}\label{Th0.main} Let the operator $A$ and the nonlinearity $f$ satisfy some natural assumptions stated in Section \ref{s2} and let the right-hand side $g\in H$ be chosen in such a way that the set $\Cal R$ of equilibria points of equation \eqref{0.abs} is finite. Then, there is a prevalent set of continuous maps $F: H\to\R$ each of them can be chosen as an asymptotically determining functional for problem \eqref{0.abs}. Moreover, such a functional can be chosen from the class of polynomial maps of sufficiently   { large} order.
\end{theorem}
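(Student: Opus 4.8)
The plan is to pass to the global attractor, reduce the asymptotic determining property to a static separation property there, and then obtain the latter from a Takens--Sauer--Yorke--Casdagli type prevalence argument in which the finiteness of $\mathcal R$ enters only through finitely many explicit conditions on $F$. \emph{Reduction to the attractor.} The assumptions of Section~\ref{s2} make $S_t$ dissipative and smoothing, so \eqref{0.abs} possesses a compact global attractor $\mathcal A$ of finite box-counting dimension $d$, every trajectory converges to $\mathcal A$, and the solution operators $S_t$ are globally Lipschitz on $H$. I claim that any $F\in C(H,\R)$ with the property
\[
(\star)\qquad a,b\in\mathcal A,\ \ F(S_s a)=F(S_s b)\ \ \text{for all }s\ge0\ \ \Longrightarrow\ \ a=b
\]
is already an asymptotically determining functional. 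Indeed, if $F(u_1(t))-F(u_2(t))\to0$, then along any $t_n\to\infty$ one extracts a subsequence with $u_1(t_n)\to a\in\mathcal A$ and $u_2(t_n)\to b\in\mathcal A$; continuity of $S_s$ gives $u_1(t_n+s)\to S_s a$ and $u_2(t_n+s)\to S_s b$, hence $F(S_sa)=F(S_sb)$ for every $s\ge0$, so $a=b$ by $(\star)$; since this forces $\|u_1(t_n)-u_2(t_n)\|_H\to0$ along every subsequence and the trajectories eventually lie in a fixed compact set, $\|u_1(t)-u_2(t)\|_H\to0$. Thus it suffices to produce a prevalent family of $F$ — in fact polynomials of large order — satisfying $(\star)$.

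\emph{From $(\star)$ to a delay embedding.} Fix $\tau>0$ and $k\in\N$ and, for $F\in C(H,\R)$, put $\Phi_F\colon\mathcal A\times\mathcal A\to\R^k$, $\Phi_F(a,b)=\big(F(S_{j\tau}a)-F(S_{j\tau}b)\big)_{j=0}^{k-1}$. If $\Phi_F$ vanishes nowhere on $(\mathcal A\times\mathcal A)\setminus\mathrm{diag}$ then $(\star)$ holds, since $F(S_sa)=F(S_sb)$ for all $s\ge0$ gives in particular $\Phi_F(a,b)=0$. So the goal becomes: for prevalent $F$ and suitable $\tau,k$, one has $0\notin\Phi_F\big((\mathcal A\times\mathcal A)\setminus\mathrm{diag}\big)$; as $\dim_{\mathrm{box}}(\mathcal A\times\mathcal A)=2d$, this should hold as soon as $k\ge 2d+1$. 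Note that for fixed $(a,b)$ the map $F\mapsto\Phi_F(a,b)$ is \emph{linear}, and since point masses at distinct points of $H$ are linearly independent, whenever the $2k$ points $S_{j\tau}a,S_{j\tau}b$ are pairwise distinct the fibre $\{F:\Phi_F(a,b)=0\}$ has codimension exactly $k$.

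\emph{Genericity via a finite-dimensional probe.} Choose $m$ so large that (by a Man\'e-type projection theorem) a generic projection of $H$ onto $m$ modes is injective on $\mathcal A$, and let $P$ be the finite-dimensional space of polynomials of degree $\le N$ in those $m$ coordinates with $N$ large, equipped with Lebesgue measure. It is enough to prove that $(\star)$ holds for Lebesgue-a.e.\ $p\in P$: this yields both the polynomial assertion of the theorem and, by definition of prevalence with $P$ as a probing subspace, prevalence in $C(H,\R)$. Set $\mathcal B=\{(p,a,b)\in P\times\mathcal A^2:\ a\ne b,\ \Phi_p(a,b)=0\}$; we must show its image in $P$ is Lebesgue-null. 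First discard the finite union of proper hyperplanes $\{p:\ p(e)=p(e'),\ e\ne e'\in\mathcal R\}$ — this is the \emph{only} place where finiteness of $\mathcal R$ is used. For the remaining $p$ and for pairs $(a,b)$ with $\{a,b\}\not\subset\mathcal R$ and the $2k$ points $S_{j\tau}a,S_{j\tau}b$ pairwise distinct, evaluation of degree-$\le N$ polynomials at any $2k$ distinct points of $\R^m$ is onto $\R^{2k}$ for $N$ large, so $p\mapsto\Phi_p(a,b)$ is onto $\R^k$ and the corresponding fibre of $\mathcal B$ has codimension $k\ge 2d+1>\dim_{\mathrm{box}}\mathcal A^2$; a Fubini/box-counting estimate then shows that this part of $\mathcal B$ projects to a null set.

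\emph{The main difficulty: periodic orbits.} The one nontrivial issue is the set of pairs $(a,b)$ for which the $2k$ points $S_{j\tau}a,S_{j\tau}b$ fail to be distinct — pairs lying on periodic orbits of period commensurable with $\tau$, together with pairs with $b$ on the orbit of $a$. On such pairs $\Phi_\bullet(a,b)$ may drop rank below $2d+1$, and the Fubini bound above requires that for every $r$ the set of pairs with $\mathrm{rank}\,\Phi_\bullet(a,b)\le r$ has box-dimension $<r$: this is precisely the Sauer--Yorke--Casdagli condition on periodic points, and it is where the structural hypotheses of Section~\ref{s2} must be used. For a generic $g$ the equilibria in $\mathcal R$ are hyperbolic, and in the gradient/monotone situation there are no periodic orbits at all and every complete bounded trajectory converges to equilibria as $s\to\pm\infty$; then, after separating $\mathcal R$, property $(\star)$ reduces to separating the finitely many families of connecting orbits between pairs of equilibria, each of dimension $\le d-1$, which is achieved by $k\ge 2d-1$ delays and large $N$. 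Choosing $\tau$ from a full-measure set of delays disposes of the residual resonances, and optimising $m$, $N$, $k$ completes the argument. I expect the control of the low-rank (periodic) stratum of $\mathcal A\times\mathcal A$ to be the real obstacle; the reduction step and the finite-dimensional transversality count are comparatively routine.
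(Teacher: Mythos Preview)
Your reduction step --- passing to the attractor and showing that a functional separating trajectories on $\mathcal A$ is automatically asymptotically determining --- is correct and matches the paper's argument (Proposition~\ref{Prop1.det} there). The paper then obtains the separating functional by simply \emph{citing} a H\"older/box-counting version of the Takens delay embedding theorem from Robinson's monograph (Theorem~\ref{Th3.Tkn} in the paper, quoting \cite{R11}), rather than reproving it. Two structural inputs feed into that citation: finiteness of $\mathcal R$, and a uniform \emph{lower bound} $T_0>0$ on the period of every nontrivial periodic orbit of \eqref{2.abs} (Proposition~\ref{Prop3.per}, also from \cite{R11}). One then fixes $\tau$ relative to $T_0$ so that the $k$ sampled points $u,S(\tau)u,\dots,S((k-1)\tau)u$ are automatically distinct on any periodic orbit, which is exactly what makes your rank computation go through uniformly.

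This is where your proposal has a genuine gap. You correctly identify the periodic stratum as the obstacle, but your resolution is to assume a gradient/monotone structure so that there are no periodic orbits and the attractor consists of equilibria and connecting orbits. Nothing in Section~\ref{s2} gives you that: $f$ is merely globally Lipschitz and bounded, and \eqref{2.abs} can perfectly well have nontrivial periodic orbits. The fact you actually need --- and which \emph{does} follow from the parabolic structure of Section~\ref{s2} --- is the minimum-period estimate of Proposition~\ref{Prop3.per}. With that in hand, a suitable choice of $\tau$ removes all rank degeneracies except those coming from $\mathcal R$, and your finitely-many-hyperplanes argument for equilibria then finishes the job. A secondary point: the paper takes $k\ge(2+\dim_B\mathcal A)\dim_B\mathcal A+1$ rather than your $k\ge 2d+1$, because the infinite-dimensional prevalence argument in \cite{R11} passes through a H\"older Man\'e projection and must absorb the (dual) thickness exponent of $\mathcal A$; your cleaner count $2d+1$ is only justified once that exponent is known to vanish.
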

The proof of this theorem is strongly based on the H\"older continuous version of the Takens delay embedding theorem and is given in Section \ref{s3}.
\par
The next corollary describes the dynamical properties of the scalar quantity $Z(t):=F(u(t))$.
\begin{corollary}\label{Cor0.delay} Let the assumptions of Theorem \ref{Th0.main} hold and let $F:H\to\R$ be a smooth determining functional constructed there. Then, there is a sufficiently small delay $\tau>0$, a sufficiently large $k\in\Bbb N$ and a continuous function $\bar\Theta:\R^k\to\R$ such that the quantity $Z(t):=F(u(t))$   obeys the following scalar ODE with delay:
\begin{equation}\label{0.delay}
\frac d{dt}Z(t)=\bar \Theta(Z(t-\tau),\cdots,Z(t-k\tau)),\ \ t\in\R
\end{equation}
for every $u(t)$ belonging to the global attractor $\Cal A$.
\end{corollary}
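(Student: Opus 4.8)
The plan is to exploit the finite-dimensionality of the global attractor $\Cal A$ together with the H\"older Takens embedding theorem (already invoked in the proof of Theorem \ref{Th0.main}) to realise the dynamics on $\Cal A$ inside a finite-dimensional Euclidean space via a delay map built from the single functional $F$. First I would recall that, under the assumptions of Section \ref{s2}, the semigroup $S(t)$ generated by \eqref{0.abs} possesses a compact global attractor $\Cal A$ of finite box-counting dimension $d$, and that the solution operator is Lipschitz on $\Cal A$ and (since \eqref{0.abs} is a parabolic equation with globally Lipschitz nonlinearity) the map $t\mapsto u(t)$ is smooth — in particular $C^1$ — for complete trajectories $u(\cdot)$ lying on $\Cal A$. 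Fix $k\in\Bbb N$ with $k>2d$ (so that the Takens-type theorem applies) and introduce, for a small delay $\tau>0$, the delay-observation map
\begin{equation}\label{0.Phi}
\Phi_\tau(v):=\big(F(v),\,F(S(\tau)v),\,\cdots,\,F(S((k-1)\tau)v)\big)\in\R^k,\qquad v\in\Cal A.
\end{equation}
The H\"older version of the Takens delay embedding theorem (the very tool used to prove Theorem \ref{Th0.main}) guarantees that for a prevalent set of continuous $F$ — and in particular for the polynomial $F$ produced there — and for all sufficiently small $\tau$, the map $\Phi_\tau:\Cal A\to\R^k$ is injective and bi-H\"older onto its image $\Cal A_\tau:=\Phi_\tau(\Cal A)$, so $\Phi_\tau^{-1}:\Cal A_\tau\to\Cal A$ is a well-defined continuous map.

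Next I would transport the dynamics. Since $\Cal A$ is strictly invariant, for a complete trajectory $u(t)\in\Cal A$ the vector
\begin{equation}\label{0.Zvec}
\mathbf Z(t):=\big(Z(t),Z(t-\tau),\cdots,Z(t-(k-1)\tau)\big)=\Phi_\tau\big(S\big(-(k-1)\tau\big)u(t)\big)\in\Cal A_\tau
\end{equation}
stays in the compact set $\Cal A_\tau$, and $u(t-(k-1)\tau)=\Phi_\tau^{-1}(\mathbf Z(t))$, hence $u(s)$ for every $s\le t$ is recovered from $\mathbf Z(t)$ through $\Phi_\tau^{-1}$ and the semigroup. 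Differentiating $Z(t)=F(u(t))$ and using the equation \eqref{0.abs} to express $\Dt u$ gives $\frac d{dt}Z(t)=\langle F'(u(t)),\,-Au(t)+f(u(t))+g\rangle$ when $F$ is smooth (for polynomial $F$ of the order used in Theorem \ref{Th0.main} one must check this pairing makes sense on $\Cal A$, which follows from the smoothing property $\Cal A\subset D(A^{s})$ for all $s$ that holds for such parabolic problems). The right-hand side is thus a fixed continuous function of $u(t)$, call it $G(u(t))$; composing with $u(t)=S((k-1)\tau)\Phi_\tau^{-1}(\mathbf Z(t))$ we obtain
\begin{equation}\label{0.Theta}
\bar\Theta(z_1,\cdots,z_k):=G\big(S((k-1)\tau)\,\Phi_\tau^{-1}(z_1,\cdots,z_k)\big),\qquad (z_1,\cdots,z_k)\in\Cal A_\tau,
\end{equation}
which is continuous on $\Cal A_\tau$ as a composition of continuous maps; extending it to a continuous function on all of $\R^k$ by Tietze's theorem yields \eqref{0.delay} for every complete trajectory on $\Cal A$.

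The main obstacle I anticipate is not the construction of $\bar\Theta$ but the verification that the delay map $\Phi_\tau$ is a genuine embedding of $\Cal A$ for the \emph{same} $F$ as in Theorem \ref{Th0.main} and for \emph{all} sufficiently small $\tau$ simultaneously, rather than merely for a generic $\tau$: the classical H\"older Takens theorem gives injectivity of the time-$\tau$-sampled delay map for $\tau$ outside an exceptional set, so some care (a continuity/compactness argument in $\tau$, or the prevalence statement applied jointly) is needed to fix a single small $\tau$ that works — this is exactly the point where one leans on the finite box-counting dimension of $\Cal A$ and on the injectivity already established in the proof of Theorem \ref{Th0.main}. A secondary technical point is the regularity of $t\mapsto u(t)$ and the well-posedness of the pairing $\langle F'(u),\,Au\rangle$ for the polynomial $F$ in question; both are handled by the parabolic smoothing estimates and the bootstrap $\Cal A\subset\bigcap_s D(A^s)$ standard for \eqref{0.abs}.
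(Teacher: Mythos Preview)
Your approach is essentially the same as the paper's: build the delay map from $F$ via the Takens theorem, invert it on the image of the attractor to recover $u(t)$, differentiate $Z(t)=F(u(t))$ using the equation, and extend the resulting function by Tietze. Two points, however, deserve correction.

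First, there is an indexing slip that matters for the form of the statement. Your vector $\mathbf Z(t)=(Z(t),Z(t-\tau),\dots,Z(t-(k-1)\tau))$ contains $Z(t)$ itself, so your $\bar\Theta$ produces an equation $\tfrac{d}{dt}Z(t)=\bar\Theta(Z(t),Z(t-\tau),\dots,Z(t-(k-1)\tau))$, which is not the purely retarded DDE \eqref{0.delay} asked for in the corollary. The paper avoids this by composing with one extra time step: it sets $\Theta:=S(k\tau)\circ F(k,\cdot)^{-1}$ (your $\Phi_\tau^{-1}$ followed by $S(k\tau)$ rather than $S((k-1)\tau)$), which expresses $u(t)$ as a continuous function of $(Z(t-k\tau),\dots,Z(t-\tau))$ and hence gives $\tfrac{d}{dt}Z(t)$ in terms of strictly past values only. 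Your argument is repaired by this one-step shift.

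Second, your ``main obstacle'' is not one. The functional $F$ of Theorem \ref{Th0.main} is \emph{produced} by the Takens-type theorem (Theorem \ref{Th3.Tkn} in the paper) for a \emph{specific} $\tau$ satisfying the period condition of Proposition \ref{Prop3.per}; there is no need to argue that the embedding persists for all small $\tau$ or for a generic $\tau$. You simply reuse the same pair $(F,\tau)$ that already makes $F(k,\cdot)$ injective on $\Cal A$. Your remark on the pairing $\langle F'(u),Au\rangle$ is handled exactly as you say (and as the paper does): the attractor lies in $D(A)$, so $A$ is continuous from $\Cal A$ to $H$ and the derivative formula is legitimate.
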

We now relax the assumption that the set $\Cal R$ of equilibria is finite. To this end, we introduce the so-called embedding dimension $\dim_{emb}(\Cal R,H)$ as the minimal number $M$ such that there is an injective continuous map $\Phi:\Cal R\to\R^M$. Obviously, the number of functionals in any determining system cannot be less than $\dim_{emb}(\Cal R,H)$:
\begin{equation}
\dim_{det}(S(t),H)\ge\dim_{emb}(\Cal R,H),
\end{equation}
 where $\dim_{det}(S(t),H)$ is the minimal number $N$ such that equation \eqref{0.abs} possesses a determining system of $N$ continuous (not necessarily linear) functionals. The next proposition is the analogue of Theorem \ref{Th0.main} for {  the} general case.
 \begin{proposition}\label{Prop0.R} Let all of the assumptions of Theorem \ref{Th0.main} except { of}  the finiteness of $\Cal R$ be satisfied. Then, the determining dimension of the dynamical system $S(t)$ generated by equation \eqref{0.abs} satisfies:
 \begin{equation}\label{0.est}
 \dim_{emb}(\Cal R,H)\le\dim_{det}(S(t),H)\le \dim_{emb}(\Cal R)+1.
 \end{equation}
 \end{proposition}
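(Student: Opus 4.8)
The plan is to prove the two inequalities in \eqref{0.est} separately. The left one is immediate and was in fact already recorded before the proposition: if $\{F_1,\dots,F_N\}$ is any determining system and $r\ne r'$ are two equilibria, then the constant trajectories $u_1(t)\equiv r$, $u_2(t)\equiv r'$ show that $(F_1,\dots,F_N)(r)\ne(F_1,\dots,F_N)(r')$, so $(F_1,\dots,F_N)$ restricts to an injective continuous map $\Cal R\to\R^N$ and $N\ge\dim_{emb}(\Cal R,H)$. Note also that $M:=\dim_{emb}(\Cal R,H)$ is finite, since $\Cal R$ is a compact subset of the finite-dimensional global attractor $\Cal A$. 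So the real task is to build a determining system consisting of $M+1$ functionals.

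First I would fix, once and for all, $M$ continuous functionals $G_1,\dots,G_M:H\to\R$ whose restriction $G:=(G_1,\dots,G_M):\Cal R\to\R^M$ is injective; such a map $\Cal R\to\R^M$ exists by the very definition of $\dim_{emb}$, and it extends from $\Cal R$ to $H$ by Tietze's theorem (and, after a small perturbation, can be taken polynomial). The remaining functional $F:H\to\R$ is then chosen from a prevalent set of continuous (or polynomial of sufficiently large degree) functionals, together with a small delay $\tau>0$ and a large integer $k$, exactly as in the proof of Theorem \ref{Th0.main}, in such a way that the delay map
\begin{equation*}
\Cal D_F:\Cal A\to\R^k,\qquad \Cal D_F(u):=\big(F(u),\,F(S(\tau)u),\,\dots,\,F(S((k-1)\tau)u)\big)
\end{equation*}
separates every pair of points of $\Cal A$ \emph{except possibly pairs both of which lie in $\Cal R$}. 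Granting this, the combined map $\Psi:=(G_1,\dots,G_M,\Cal D_F):\Cal A\to\R^{M+k}$ is \emph{injective}: a pair of distinct equilibria is separated by $G$, and every other pair by $\Cal D_F$. Since $\Cal A$ is compact and $\Psi$ is continuous, $\Psi$ is a homeomorphism onto its image; in particular $\Psi^{-1}$ is uniformly continuous on $\Psi(\Cal A)$.

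I then claim that the $(M+1)$-element system $\{G_1,\dots,G_M,F\}$ is asymptotically determining, by the usual ``approach to the attractor'' argument. Let $u_1(t),u_2(t)$ be two trajectories with $G_j(u_1(t))-G_j(u_2(t))\to0$ for all $j$ and $F(u_1(t))-F(u_2(t))\to0$ as $t\to\infty$. By dissipativity $\dist(u_i(t),\Cal A)\to0$, so one may pick nearest points $a_i(t)\in\Cal A$ with $\|u_i(t)-a_i(t)\|_H\to0$. Uniform continuity of $G_j$ on bounded sets gives $G_j(a_1(t))-G_j(a_2(t))\to0$; and, for each $j=0,\dots,k-1$, combining the time shift $F(u_1(t+j\tau))-F(u_2(t+j\tau))\to0$ with the Lipschitz continuity of $S(j\tau)$ on bounded sets and the uniform continuity of $F$ yields $F(S(j\tau)a_1(t))-F(S(j\tau)a_2(t))\to0$. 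Hence $\Psi(a_1(t))-\Psi(a_2(t))\to0$ in $\R^{M+k}$, and uniform continuity of $\Psi^{-1}$ forces $\|a_1(t)-a_2(t)\|_H\to0$, so that $\|u_1(t)-u_2(t)\|_H\to0$. This gives $\dim_{det}(S(t),H)\le M+1$.

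The one substantial point left is the separation property of $\Cal D_F$, and this is where I expect the main effort to lie: it is a version of the H\"older Takens delay embedding theorem already used for Theorem \ref{Th0.main}. Taking $k$ larger than $2\dim_{box}(\Cal A,H)$ and the delay $\tau$ generic so as to avoid resonances with the periods of the short periodic orbits, a prevalence/dimension-counting argument shows that the set of $F$ for which $\Cal D_F$ fails to separate some pair of points of $\Cal A$ having at most one equilibrium among its two members is shy (its complement is prevalent). The only difference from the finite-$\Cal R$ setting of Theorem \ref{Th0.main} is that one no longer demands $\Cal D_F$ to be injective on $\Cal R$ itself --- which is impossible for prevalent $F$ once $\Cal R$ does not embed into $\R$ --- and this is exactly why the functionals $G_1,\dots,G_M$ tuned to $\Cal R$ are prepended; the contribution of the (possibly positive-dimensional) set $\Cal R$ to the dimension count is absorbed since $\dim_{box}(\Cal R,H)\le\dim_{box}(\Cal A,H)$ and $k$ was taken large. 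Finally, since $F$ (and the $G_j$) may be taken to be polynomials of sufficiently large degree, the whole determining system can be chosen polynomial, and $\dim_{det}(S(t),H)\le \dim_{emb}(\Cal R,H)+1$ follows.
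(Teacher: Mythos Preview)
Your proposal is correct and follows essentially the same route as the paper: the lower bound is the trivial observation that a determining system must be injective on $\Cal R$, while for the upper bound you split pairs $x,y\in\Cal A$ into the case where at least one of them is not an equilibrium (handled by the Takens delay map $\Cal D_F$ for a prevalent $F$, exactly as in the proof of Theorem~\ref{Th0.main}) and the case where both lie in $\Cal R$ (handled by a continuous embedding $G=(G_1,\dots,G_M):\Cal R\to\R^M$ extended to $H$ via Tietze), yielding a system $\{G_1,\dots,G_M,F\}$ of $M+1$ functionals. The only cosmetic difference is that you verify asymptotic determination by a direct nearest-point-on-the-attractor argument, whereas the paper obtains it by invoking Proposition~\ref{Prop1.det} once the system is shown to be separating on $\Cal A$; and note that by Proposition~\ref{Prop3.per} there are no non-trivial periodic orbits of period $<T_0$, so your remark about ``avoiding resonances with short periodic orbits'' is vacuous --- one simply takes $k\tau\le T_0$.
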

 Thus, the number of functionals in the "optimal" determining system  {  depends}  only   {  on}  the embedding properties of the equilibria set $\Cal R$ to Euclidean spaces and is {\it not related}
  {  to} the size of the global attractor or the complexity of the dynamics on it.
Moreover, as shown in Example \ref{Ex4.wave} below, it may be unrelated even   {  to} the dissipativity of the system considered.
 \par
 We also note that the above mentioned "paradox" with the case of one spatial dimension can be naturally resolved in light of Theorem \ref{Th0.main} and Proposition \ref{Prop0.R}. Indeed, although the optimal number of determining functionals is always related
{  to} the set of equilibria $\Cal R$ only, in 1D case the situation is essentially simpler since the equilibria  usually solve the system of ODEs and, therefore, the embedding dimension of $\Cal R$ cannot be greater than the order of this system of ODEs (of course, if this system is smooth enough for the uniqueness theorem to  hold). Thus, in  1D case we always have a good estimate for $\dim_{emb}(\Cal R,H)$ which is independent of the physical parameters of the system, see Examples \ref{Ex4.dir}, \ref{Ex4.per} and \ref{Ex4.inf} where the explicit form of possible determining functionals are given for various cases of semilinear heat equations.
 \par
 In contrast to this, in the multidimensional case, the elements of $\Cal R$ are usually the solutions of elliptic PDEs and we do not have convenient estimates for the size of $\Cal R$, we only know that, for generic external forces $g$, $\Cal R$ is finite due to the Sard theorem, but for exceptional choices of $g$, we do not have anything more than the obvious estimate
 \begin{equation}
 \dim_{emb}(\Cal R,H)\le 2\dim_B(\Cal R,H)+1\le \dim_B(\Cal A,H)+1
 \end{equation}
 and the box-counting dimension of $\Cal R$ may indeed be compatible with the appropriate dimension of the global attractor $\Cal A$, see Example \ref{Ex4.deg}. Thus, the assumption that the external forces $g$ should be generic   {  appears to be} unavoidable if we want to get sharp results. Note also, that as shown in Example \ref{Ex4.linbad}, the class of linear functionals is not sufficient to get the above mentioned results, so considering polynomial functionals is also unavoidable.
\par
The paper is organized as follows. Some preliminary results on a general theory of determining functionals are collected in Section \ref{s1}. In particular, a bit stronger concept of functionals separating trajectories on the attractor is introduced there. The non-equivalence of determining and separating functionals is shown in Example \ref{Ex4.sep}.
\par
The classical theory of determining Fourier modes for equation \eqref{0.abs} including the related Lyapunov-Schmidt reduction and recent applications to restoring the trajectory $u(t)$ by the observation data related to determining modes are discussed in Section \ref{s2}.
\par
The main results of the paper are stated and proved in Section \ref{s3}.
\par
Section \ref{s4}, which is also one of the central sections of the paper, is devoted to various examples and counter-examples related  
{  to} the theory of determining functionals. Finally, some open problems which are interesting from our point of view are presented in Section \ref{s5}.

\section{Preliminaries}\label{s1}
In this section we recall the basic facts about the determining functionals, introduce the notations and state the necessary definitions, see e.g. \cite{Chu,OT08} for more detailed exposition. Let $H$ be a Banach space and $S(t): H\to H$, $t\ge0$ be a semigroup acting on it, i.e.
\begin{equation}\label{1.sem}
S(t+l)=S(t)\circ S(l) \ \text{ for all $t,l\ge0$  and }\ S(0)=\operatorname{Id}.
\end{equation}
This semigroup will be referred as a dynamical system (DS) in the phase space $H$ and its orbits $u(t)=S(t)u_0$, $u_0\in H$, $t\ge0$ will be treated as (semi)trajectories of this dynamical system.
\par
We are now ready to give the definition of determining functionals which will be used throughout the paper.
\begin{definition}\label{Def1.as-det} A system $\Cal F:=\{F_1,\cdots, F_N\}$ of possibly non-linear continuous functionals $F_i:H\to\R$ is called asymptotically determining for the DS $S(t)$ if, for any two trajectories $u(t)$ and $v(t)$ of this DS,
\begin{equation}\label{1.F-van}
\lim_{t\to\infty}(F_i(u(t))-F_i(v(t))=0,\ \ i=1,\cdots, N
\end{equation}
implies that
\begin{equation}\label{1.van}
\lim_{t\to\infty}\|u(t)-v(t)\|_H=0.
\end{equation}
\end{definition}
Let us assume in addition that the considered DS is {\it dissipative}, i.e., there exist  positive constants $C$ and $\alpha$ and a monotone increasing function $Q$ such that
\begin{equation}\label{1.dis}
\|u(t)\|_H\le Q(\|u_0\|_H)e^{-\alpha t}+C
\end{equation}
for every trajectory $u(t)$,
 and possesses  the so-called {\it global attractor} $\Cal A$ in the phase space $H$. We recall that the set $\Cal A$ is a global attractor of the DS $S(t)$ if the following assumptions are satisfied:
 \par
 1) $\Cal A$ is a compact set in $H$;
 \par
 2) $\Cal A$ is strictly invariant: $S(t)\Cal A=\Cal A$ for all $t\ge0$;
 \par
 3) $\Cal A$ attracts the images of all bounded sets of $H$ as $t\to\infty$, i.e., for any bounded $B\subset H$ and any neighbourhood $\Cal O(\Cal A)$ of the attractor, there exists $T=T(B,\Cal O)$ such that
 $$
 S(t)B\subset \Cal O(\Cal A) \ \ \text{if $t\ge T $},
 $$
 see \cite{BV92,T97} for more details.
\par
In this case, we may give an alternative definition which is often  a bit simpler to verify in applications.

\begin{definition}\label{Def1.atr-det} Let $S(t)$ be a DS in $H$ which possesses a global attractor $\Cal A$ in it.  Then, a system $\Cal F:=\{F_1,\cdots,F_N\}$ of continuous functionals is called separating on the attractor $\Cal A$ if for any two complete trajectories $u(t)$ and $v(t)$, $t\in\R$ belonging to the attractor, the identity
\begin{equation}\label{1.det-attr}
F_i(u(t))=F_i(v(t)),\ \ \text{for all $t\in\R$ and $i=1,\cdots, N$}
\end{equation}
implies that $u(t)\equiv v(t)$, $t\in\R$.
\end{definition}
The next standard proposition shows that under some natural assumptions, separating system on the attractor is automatically asymptotically determining.
\begin{proposition}\label{Prop1.det} Let $S(t)$ be a dissipative DS which possesses a global attractor $\Cal A$ in the phase space $H$. Assume also that all its trajectories $u(t)$ are continuous in time (belong to the space $C_{loc}(\R_+,H)$) and the map $S: u_0\to S(t)u_0$ is continuous as the map from $H$ to $C_{loc}(\R_+,H)$. Then any separating on the attractor system $\Cal F$ is asymptotically determining.
\end{proposition}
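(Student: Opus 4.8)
The plan is to argue by contradiction, using the global attractor to manufacture, out of two trajectories that are "asymptotically indistinguishable" by $\Cal F$ yet fail to converge to one another, a pair of \emph{distinct} complete trajectories lying on $\Cal A$ which $\Cal F$ cannot separate. So fix a separating system $\Cal F=\{F_1,\dots,F_N\}$ and trajectories $u(t),v(t)$ with $F_i(u(t))-F_i(v(t))\to0$ as $t\to\infty$ for every $i$, and suppose that $\|u(t)-v(t)\|_H\not\to0$. Then there are $\varepsilon>0$ and $t_n\to\infty$ with $\|u(t_n)-v(t_n)\|_H\ge\varepsilon$. By the dissipativity estimate \eqref{1.dis} the forward orbits of $u$ and $v$ are bounded, hence the attraction property (item~3 in the definition of $\Cal A$) gives $\operatorname{dist}_H(u(t),\Cal A)\to0$ and $\operatorname{dist}_H(v(t),\Cal A)\to0$ as $t\to\infty$.

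The main step is the extraction of complete limit trajectories on $\Cal A$. Put $u_n(t):=u(t+t_n)$ and $v_n(t):=v(t+t_n)$, which by the semigroup property \eqref{1.sem} coincide with $S(t+k)\,u(t_n-k)$, respectively $S(t+k)\,v(t_n-k)$, on $[-k,\infty)$ for every $k\in\N$ and all large $n$. For each fixed $k$ the points $u(t_n-k)$ have distance to the compact set $\Cal A$ tending to $0$, so along a subsequence they converge in $H$ to some $\xi_k\in\Cal A$; the assumed continuity of $u_0\mapsto S(\cdot)u_0$ from $H$ to $C_{loc}(\R_+,H)$ then yields $u_n\to S(\cdot+k)\xi_k$ in $C_{loc}([-k,\infty),H)$. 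A diagonal extraction over $k$, applied simultaneously to the $v_n$, produces one subsequence along which $u_n\to\bar u$ and $v_n\to\bar v$ in $C_{loc}(\R,H)$, where $\bar u,\bar v$ are complete trajectories of $S(t)$ (they inherit $\bar u(t)=S(t-s)\bar u(s)$ for all $t\ge s$ from \eqref{1.sem}) lying entirely on $\Cal A$ (each value is an $H$-limit of points whose distance to the closed set $\Cal A$ tends to $0$).

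It remains to close the loop. For every fixed $t\in\R$, continuity of $F_i$ gives $F_i(\bar u(t))=\lim_n F_i(u(t+t_n))$ and $F_i(\bar v(t))=\lim_n F_i(v(t+t_n))$; since $t+t_n\to\infty$, the hypothesis on $u,v$ forces $F_i(\bar u(t))=F_i(\bar v(t))$ for all $t\in\R$ and all $i=1,\dots,N$. Because $\Cal F$ separates trajectories on $\Cal A$, this yields $\bar u\equiv\bar v$; in particular $\bar u(0)=\bar v(0)$, i.e. $\lim_n u(t_n)=\lim_n v(t_n)$ (along the chosen subsequence), which contradicts $\|u(t_n)-v(t_n)\|_H\ge\varepsilon$. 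Hence $\|u(t)-v(t)\|_H\to0$, so $\Cal F$ is asymptotically determining.

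The only genuinely delicate point is the diagonal construction of the limit trajectories together with the verification that they are complete and $\Cal A$-valued; once the continuity hypothesis on the solution map is available, everything else is a routine combination of the dissipativity bound, compactness of $\Cal A$, and continuity of the functionals $F_i$.
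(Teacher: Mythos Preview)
Your proposal is correct and follows essentially the same contradiction argument as the paper: extract from the time-shifted trajectories two limit trajectories on the attractor that $\Cal F$ cannot distinguish yet differ at $t=0$. The paper packages the extraction via the trajectory dynamical system (the shift semigroup on $K_+\subset C_{loc}(\R_+,H)$) and its asymptotic compactness, while you carry out an explicit diagonal argument over the backward horizon $k$ to obtain two-sided limits in $C_{loc}(\R,H)$; your version is arguably a touch more careful in directly producing \emph{complete} trajectories on all of $\R$, which is what Definition~\ref{Def1.atr-det} requires.
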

\begin{proof}   {  Assume} that $\Cal F$ is not asymptotically determining. Then, there exist two trajectories $u(t)$ and $v(t)$ such that $F_i(u(t))\to F_i(v(t))$ as $t\to \infty$, but $u(t)$ does not tend to $v(t)$ as $t\to\infty$. Therefore, there exists a sequence $t_n\to\infty$ such that $\|u(t_n)-v(t_n)\|_H\ge\eb>0$.
\par
Recall that, by assumptions, $S(t)$ in $H$ possesses a global attractor $\Cal A$. Let us consider the set $K_+\subset C_{loc}(\R_+,H)$ of all semi-trajectories of the initial DS. Then the semigroup of shifts
$$
(T(h)u)(t):=u(t+h),\  t,h\ge0
$$
acts on $K_+$ (i.e. $T(h)K_+\subset K_+$). This semigroup is often referred as the trajectory dynamical system associated with the initial DS $S(t)$, see \cite{CV02, MZ08} for more details. Moreover, according to our assumptions, this semigroup is homeomorphically conjugated to the initial DS $S(t)$. In particular, the orbits $T(h)u$, $h\in\R_+$ are asymptotically compact in $C_{loc}(\R_+,H)$ for any $u\in K_+$. Thus, without loss of generality, we may assume that
$$
T(t_n)u\to\bar u,\ \ \ T(t_n)v\to\bar v
$$
in $C_{loc}(\R_+,H)$, where $\bar u$ and $\bar v$ are two trajectories belonging to the attractor. Since we assume that $F_i(u(t))\to F_i(v(t))$ as $t\to\infty$, from the continuity of $F_i$ we conclude that
$$
F_i(\bar u(t))\equiv F_i(\bar v(t)),  \ \forall t\in\R.
$$
On the other hand, since $\|u(t_n)-v(t_n)\|_H\ge\eb>0$, we conclude that $\bar u(0)\ne\bar v(0)$. The last statement contradicts the assumption that $\Cal F$ is separating on the attractor and finishes the proof of the proposition.
\end{proof}
\begin{remark}\label{Rem1.strange} As we will show in Example \ref{Ex4.sep}  below, the asymptotically determining system may be not separating on the attractor, so the above two definitions are not equivalent. However, the assumptions of Definition \ref{Def1.atr-det} are usually easier to verify, so we mainly deal in what follows with separating systems on the attractor.
\par
 The analogue of Proposition \ref{Prop1.det} holds for some non-dissipative systems as well.
Indeed, if we replace the existence of a global attractor by the assumption that the $\omega$-limit set
$\omega(u_0)$ of any point $u_0\in H$ is not empty and compact and require \eqref{1.det-attr} to be satisfied
for any complete bounded trajectory $u(t)$, $t\in\R$, the assertion of the proposition will remain true.
We also note that the continuity assumptions on the semigroup can be relaxed  {  to}  the standard requirements that
the maps $S(t)$ are continuous for every fixed $t$.
\end{remark}
Since we are interested in the number $N$ of the "optimal" system of determining functionals, it is natural to give the following definition.
\begin{definition}\label{Def3.det-dim} Let $S(t)$ be a DS acting in a Banach space $H$. A determining dimension $\dim_{det}(S(t),H)$ is the minimal number $N\in\Bbb N$ such that there exists asymptotically determining system $\Cal F$ which consists of $N$ functionals.
\end{definition}
Our primary goal is to find or estimate the determining dimension for a given DS $S(t)$. We start with some obvious, but useful observations. Namely, let $\Cal R\subset H$ be the set of all equilibria of the DS $S(t)$ and let $\Cal F=\{F_1,\cdots,F_N\}$ be an asymptotically determining system of functionals which generates a continuous map $F:H\to\R^N$  via $F(u):=(F_1(u),\cdots, F_N(u))$. Obviously, this map must be {\it injective} on $\Cal R$. This gives a natural lower bound for the determining dimension:
\begin{equation}\label{1.lower}
\dim_{det}(S(t),H)\ge \dim_{emb}(\Cal R),
\end{equation}
where the embedding dimension of a set $\Cal R\subset H$ is a minimal $N\in\Bbb N$ such that there exists a continuous embedding of $\Cal R$ to $\R^N$.
\par
Let us now discuss straightforward upper bounds. To this end, we assume that the DS considered is dissipative and possesses a global attractor $\Cal A$ of finite box-counting (fractal) dimension
\begin{equation}\label{1.dim-ass}
\dim_B(\Cal A,H)<\infty.
\end{equation}
This assumption is true for many interesting classes of dissipative systems generated by PDEs, see \cite{BV92,MZ08,T97} and references therein. Then, by Man\'e projection theorem, see e.g. \cite{3,hunt,R11}, a generic projector $P:H\to V_N$ on a plane $V_N$ of dimension $N\ge2\dim_B(\Cal A,H)+1$ is one-to-one on the attractor $\Cal A$. Let us write this projector in the form
\begin{equation}\label{1.mp}
Ph=\sum_{n=1}^N (F_n,h)v_n,\ \ h\in H,
\end{equation}
where $F_n\in H^*$ and $\{v_n\}_{n=1}^N\in H$ is the base in $V_N$. Then the system of linear functionals $\{F_n(h)=(F_n,h)\}_{n=1}^N$ is obviously separating on the attractor $\Cal A$. This gives the desired estimate:
\begin{equation}
\dim_{det}(S(t),H)\le 2\dim_B(\Cal A,H)+1.
\end{equation}
Thus, we have proved the following proposition.
\begin{proposition}\label{Prop1.rough} Let the DS $S(t)$ be dissipative and possess a global attractor $\Cal A$ of finite fractal dimension. Then,
\begin{equation}\label{1.r}
\dim_{emb}(\Cal R)\le \dim_{det}(S(t),H)\le 2\dim_B(\Cal A,H)+1,
\end{equation}
where $\Cal R\subset H$ is the set of all equilibria of the DS considered.
\end{proposition}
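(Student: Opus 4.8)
The plan is to establish the two inequalities in \eqref{1.r} independently: the left one is essentially a tautology already recorded in \eqref{1.lower}, while the right one combines the Man\'e projection theorem with Proposition \ref{Prop1.det}.

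For the lower bound I would start from the observation that every $r\in\Cal R$ is a fixed point of $S(t)$ and hence defines a constant complete trajectory. Given any asymptotically determining system $\Cal F=\{F_1,\dots,F_N\}$ with associated continuous map $F=(F_1,\dots,F_N):H\to\R^N$, if $r_1,r_2\in\Cal R$ satisfied $F(r_1)=F(r_2)$ then the constant trajectories $u(t)\equiv r_1$, $v(t)\equiv r_2$ would give $F_i(u(t))-F_i(v(t))\equiv 0$, so Definition \ref{Def1.as-det} would force $r_1=r_2$. Hence $F$ is injective on $\Cal R$, i.e.\ $\Cal R$ embeds continuously into $\R^N$, so $N\ge\dim_{emb}(\Cal R)$; minimising over $\Cal F$ yields $\dim_{det}(S(t),H)\ge\dim_{emb}(\Cal R)$.

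For the upper bound I would set $N:=2\dim_B(\Cal A,H)+1$, which is finite by assumption. By the Man\'e projection theorem there is a prevalent --- in particular non-empty --- set of bounded linear projectors $P:H\to V_N$ onto $N$-dimensional subspaces whose restriction $P|_{\Cal A}$ is injective. Fixing such a $P$, a basis $\{v_n\}_{n=1}^N$ of $V_N$, and writing $P$ in the form \eqref{1.mp}, $Ph=\sum_{n=1}^N(F_n,h)v_n$ with $F_n\in H^*$, the linear functionals $F_n(h):=(F_n,h)$ produce a continuous map $F:H\to\R^N$ with $F(h)=F(h')\iff Ph=Ph'$ (because $(\xi_1,\dots,\xi_N)\mapsto\sum_n\xi_n v_n$ is a linear isomorphism $\R^N\to V_N$); thus $F$ is injective on $\Cal A$. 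Since $\Cal A$ is strictly invariant, coincidence of all the $F_i$ along two complete trajectories $u(t),v(t)$ lying in $\Cal A$ then forces $u(t)\equiv v(t)$, so $\Cal F=\{F_1,\dots,F_N\}$ is separating on $\Cal A$ in the sense of Definition \ref{Def1.atr-det}. Invoking Proposition \ref{Prop1.det} upgrades this to the asymptotically determining property, giving $\dim_{det}(S(t),H)\le N$.

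The main obstacle --- or rather the only delicate point, since the two cited results do the heavy lifting --- is the passage from ``separating on $\Cal A$'' to ``asymptotically determining''. This is exactly where dissipativity and the continuity of $u_0\mapsto S(\cdot)u_0$ into $C_{loc}(\R_+,H)$ enter through Proposition \ref{Prop1.det}: along a hypothetical bad sequence $t_n\to\infty$ one needs asymptotic compactness of the shifts $T(t_n)u$, $T(t_n)v$ in $C_{loc}(\R_+,H)$ in order to extract limiting complete trajectories on $\Cal A$ on which the functionals agree. If those continuity hypotheses were not available, the natural fallback, as in Remark \ref{Rem1.strange}, would be to settle for a system separating trajectories on $\Cal A$ rather than a genuinely determining one.
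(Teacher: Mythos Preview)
Your proposal is correct and follows essentially the same route as the paper: the lower bound via injectivity of any determining system on constant equilibrium trajectories (which is exactly \eqref{1.lower}), and the upper bound via the Man\'e projection theorem to get linear functionals separating on $\Cal A$, then Proposition~\ref{Prop1.det} to upgrade to asymptotically determining. Your write-up is in fact more explicit than the paper's about the role of Proposition~\ref{Prop1.det} and its continuity hypotheses in the upper bound.
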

\begin{remark}\label{Rem1.fut} As we will see below, the upper bound in \eqref{1.r} is too rough and can be replaced by the corresponding dimension of the equilibria set $\Cal R$. Note also that the determining dimension may be defined not as a minimum dimension with respect to all continuous functionals, but for functionals satisfying some extra restrictions, for instance, for {\it linear} functions only (or even linear functionals of some special form, e.g., determining nodes, etc.). This may potentially lead to new non-trivial results. However, the restriction for determining functionals to be linear does not look natural especially when the non-linear DS is considered. Moreover, as shown in Example \ref{Ex4.linbad}, linear functionals may fail to distinguish different periodic orbits  with the same period (even in linear systems) which leads to unnecessary increase of the number of functionals in a determining system. As we also see below, the class of polynomial functionals is {  rich} enough to overcome such problems.
\end{remark}

\section{Determining modes: a classical example}\label{s2}
In this section, we recall an old result on the determining modes for a semilinear parabolic equation  which we are planning to use in the sequel, see e.g. \cite{Chu} or \cite{HR} for a more detailed exposition. Namely, we consider the following abstract functional model:
\begin{equation}\label{2.abs}
\Dt u+Au-f(u)=g,\ \ u\big|_{t=0}=u_0,
\end{equation}
where $H$ is a Hilbert space, $A=A^*>0$ is a positive self-adjoint unbounded operator in $A$ with a compact inverse, $g\in H^{-\alpha}$ and $f$ is a given non-linear map which is subordinated to the operator $H$ and is bounded and globally Lipschitz:
\begin{equation}\label{2.lip}
\begin{cases}
1.\  \|f(u)\|_{H^{-\alpha}}\le -C,\ \forall u\in H^\alpha;\\
2.\   \|f(u_1)-f(u_2)\|_{H^{-\alpha}}\le L\|u_1-u_2\|_{H^\alpha},\ \forall u_1,u_2\in H^\alpha,
\end{cases}
\end{equation}
where $\alpha\in[0,1)$ and $H^\alpha:=D(A^{\alpha/2})$ and $C>0$. It is well-known that under the above assumptions,  equation \eqref{2.abs} generates a dissipative DS in the space $H$ which possesses a global attractor $\Cal A$. Moreover, this attractor has the finite box-counting dimension:
\begin{equation}
\dim_B(\Cal A,H)<\infty,
\end{equation}
see e.g. \cite{hen,R01} for more details. Note that \eqref{2.abs} is a functional model for many dissipative PDEs arising in applications including reaction-diffusion equations, Navier-Stokes system, etc. The extra assumption that $f$ is {\it globally} Lipschitz is not a big restriction if the existence of an absorbing ball is established and can be achieved by the standard cut off procedure outside of this ball.
\par
Let $\{e_n\}_{n=1}^\infty$ be an orthonormal base of eigenvectors of the operator $A$ with the corresponding eigenvalues $\{\lambda_n\}_{n=1}^\infty$ and let $P_N:H\to H$ be the orthoprojector to the first $N$ eigenvectors of $A$. Denote also $Q_N:=1-P_N$. Consider the
 system $\Cal F=\{F_1,\cdots,F_N\}$ of linear functionals generated by the corresponding Fourier modes $F_n(u):=(u,e_n)$. The next proposition tells us that the system $\Cal F$ is determining if $N$ is large enough.

 \begin{proposition}\label{Prop2.modes} Let the above assumptions hold and let $N$ satisfy the inequality
\begin{equation}\label{2.one}
 L<\lambda_{N+1}^{1-\alpha}.
 \end{equation}
 Then the system $\Cal F$ of first $N$ Fourier modes is asymptotically determining for the DS generated by equation \eqref{2.abs}.
 \end{proposition}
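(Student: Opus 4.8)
The plan is to run the classical energy estimate for the difference of two trajectories, in which the subordination constant $L$ of \eqref{2.lip} is controlled by the spectral gap $\lambda_{N+1}^{1-\alpha}$ produced by discarding the first $N$ modes. Let $u_1(t)$ and $u_2(t)$ be trajectories of \eqref{2.abs} and put $w:=u_1-u_2$, so that $\Dt w+Aw=f(u_1)-f(u_2)$. Split $w=p+q$ with $p:=P_Nw$, $q:=Q_Nw$. By the definition of the Fourier functionals $F_n(u)=(u,e_n)$, hypothesis \eqref{1.F-van} says precisely that $(w(t),e_n)\to0$ for $n=1,\dots,N$; since $P_NH$ is finite dimensional all its norms are equivalent, so this is the same as $\|p(t)\|_{H^\alpha}\to0$ as $t\to\infty$. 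Hence \eqref{1.van} will follow once we prove $\|q(t)\|_H\to0$.

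First I would take the scalar product in $H$ of $\Dt q+Aq=Q_N(f(u_1)-f(u_2))$ with $q$; using $q\in Q_NH$ and $Q_N=Q_N^*$ this gives
\[
\frac12\ddt\|q\|_H^2+(Aq,q)=(f(u_1)-f(u_2),q).
\]
The right-hand side is estimated by the $H^{-\alpha}$--$H^\alpha$ duality together with \eqref{2.lip}: $(f(u_1)-f(u_2),q)\le L\|w\|_{H^\alpha}\|q\|_{H^\alpha}\le L\|q\|_{H^\alpha}^2+L\|p\|_{H^\alpha}\|q\|_{H^\alpha}$. The one elementary fact that makes the scheme work is the interpolation inequality on the high modes: for $q=\sum_{n>N}c_ne_n$ and $\alpha<1$,
\[
\|q\|_{H^\alpha}^2=\sum_{n>N}\lambda_n^{\alpha}c_n^2\le\lambda_{N+1}^{\alpha-1}\sum_{n>N}\lambda_nc_n^2=\lambda_{N+1}^{\alpha-1}(Aq,q).
\]
Inserting this and applying Young's inequality to the cross term $L\|p\|_{H^\alpha}\|q\|_{H^\alpha}$ one obtains, for every $\eb>0$ and a constant $C_\eb>0$,
\[
\frac12\ddt\|q\|_H^2+\big(1-(L+\eb)\lambda_{N+1}^{\alpha-1}\big)(Aq,q)\le C_\eb\|p\|_{H^\alpha}^2.
\]

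Now the hypothesis \eqref{2.one}, namely $L<\lambda_{N+1}^{1-\alpha}$, is used exactly to guarantee $L\lambda_{N+1}^{\alpha-1}<1$, so that for $\eb$ small enough the coefficient $\beta:=1-(L+\eb)\lambda_{N+1}^{\alpha-1}$ is strictly positive. Together with $(Aq,q)\ge\lambda_{N+1}\|q\|_H^2$ this produces the scalar differential inequality
\[
\ddt\|q(t)\|_H^2+\delta\|q(t)\|_H^2\le h(t),\qquad \delta:=2\beta\lambda_{N+1}>0,\quad h(t):=2C_\eb\|p(t)\|_{H^\alpha}^2,
\]
with $h(t)\to0$ as $t\to\infty$. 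A standard Gronwall-type lemma (write the Duhamel formula $\|q(t)\|_H^2\le e^{-\delta t}\|q(0)\|_H^2+\int_0^te^{-\delta(t-s)}h(s)\,ds$ and split the integral at $t/2$) then gives $\|q(t)\|_H\to0$, whence $\|w(t)\|_H\le\|p(t)\|_H+\|q(t)\|_H\to0$, i.e. \eqref{1.van}. Therefore $\Cal F$ is asymptotically determining.

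I do not expect a genuine obstacle here. The points deserving care are: (i) the rigorous justification of the formal energy identity, which is routine via Galerkin truncation or the parabolic smoothing of \eqref{2.abs} and can be quoted from \cite{Chu,HR}; (ii) the bookkeeping among the norms of $H^\alpha$, $H^1=D(A^{1/2})$ and $H$ restricted to $Q_NH$; and (iii) choosing $\eb$ small enough that the strict inequality \eqref{2.one} survives. It is also worth noting that the whole argument is global --- it uses only the uniform bounds in \eqref{2.lip}, and neither dissipativity nor the existence of the global attractor --- and that exactly the same computation yields convergence in the stronger norm $H^\alpha$, though convergence in $H$ is all that Definition \ref{Def1.as-det} requires.
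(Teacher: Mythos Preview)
Your argument is correct, but it follows a different route from the paper's. The paper does not verify Definition~\ref{Def1.as-det} directly: instead it takes two \emph{complete} trajectories $u_1,u_2$ on the attractor with $P_Nu_1(t)\equiv P_Nu_2(t)$ for all $t\in\R$, runs the same energy estimate on $v=u_1-u_2$ (with $P_Nv\equiv0$, so no cross term and no Young's inequality are needed), obtains $\|v(t)\|_H^2\le e^{-\beta(t-s)}\|v(s)\|_H^2$, and sends $s\to-\infty$ using the boundedness of the attractor to get $v\equiv0$. This shows that $\Cal F$ is \emph{separating on the attractor}, and Proposition~\ref{Prop1.det} is then invoked to upgrade to asymptotic determination. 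Your forward-in-time argument, by contrast, handles the genuine asymptotic hypothesis $\|p(t)\|_{H^\alpha}\to0$ directly via the differential inequality $\ddt\|q\|_H^2+\delta\|q\|_H^2\le h(t)$ with $h(t)\to0$; this costs you the extra $\eb$ of room in~\eqref{2.one} and a Duhamel splitting, but buys independence from the attractor machinery --- as you observe, neither dissipativity nor Proposition~\ref{Prop1.det} is needed, and the statement holds for arbitrary semi-trajectories. Both proofs rest on the same spectral fact $\|q\|_{H^\alpha}^2\le\lambda_{N+1}^{\alpha-1}(Aq,q)$; the paper's version is slightly cleaner algebraically, yours is self-contained and more general.
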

  \begin{proof} Indeed, let $u_1(t)$ and $u_2(t)$ be two trajectories belonging to the attractor. Such that $P_Nu_1(t)=P_Nu_2(t)$ for all $t\in\R$. Let $v(t):=u_1(t)-u_2(t)$. Then this function solves
  \begin{equation}\label{2.dif}
  \Dt v(t)+Av(t)=[f(u_1(t))-f(u_2(t))],\ \ t\in\R.
  \end{equation}
  Taking an inner product of equation \eqref{2.dif} with $v(t)$ and using the Lipschitz continuity of the map $f$ and the fact that $P_Nv(t)\equiv0$, we arrive at
  $$
  \frac12\frac d{dt}\|v(t)\|^2_H+(\lambda_{N+1}^{1-\alpha}-L)\|v(t)\|_{H^\alpha}^2\le0
  $$
  and, therefore,
  $$
  \|v(t)\|^2_H\le e^{-\beta(t-s)}\|v(s)\|^2_H,\ \ s\le t
  $$
  for some positive $\beta$. Since $u_1$ and $u_2$ belong to the attractor, $\|v(s)\|_H$ remains bounded as $s\to-\infty$ and, passing to the limit $s\to-\infty$, we get $v(t)\equiv0$. Thus, $\Cal F$ is separating on the attractor and the proposition is proved.
  \end{proof}
\begin{remark}\label{Rem2.NS} Similar result has been initially proved for 2D Navier-Stokes problem, see \cite{FP67,Lad72}, and has been extended later for many other classes of dissipative PDEs. For simplicity we consider here only the case of abstract parabolic equations although the result remains true for much more general types of equations, including e.g. damped wave equations, etc, see \cite{Chu}. The main drawback of the construction given above is that it gives {\it one-sided} estimate for the number $N$ of functionals in the determining system $\Cal F$  and, as we will see below, there are no reasons to expect that this estimate is sharp. Moreover, the Takens conjecture as well as the results stated in the next section allow us to guess that in a "generic" situation we have $N=1$.
\end{remark}
We now reformulate the result of the proposition in terms of Lyapunov-Schmidt reduction. For simplicity, we restrict ourselves to consider the case $\alpha=0$ only although everything remains true (with minor changes in a general case $\alpha\in[0,1)$ as well). Let us  rewrite equation \eqref{2.abs} in terms of lower $u_+(t):=P_Nu(t)$ and higher $u_-(t):=Q_Nu(t)$ Fourier modes, namely,
\begin{equation}\label{2.sys}
\begin{cases}
\frac d{dt}u_+(t)+Au_+(t)-P_Nf(u_+(t)+u_-(t))=g_+,\\
\frac d{dt}u_-(t)+Au_-(t)-Q_Nf(u_+(t)+u_-(t))=g_-.
\end{cases}
\end{equation}
The next lemma shows that the higher modes part  $u_-(t)$ is uniquely determined if its lower modes part $u_+(t)$ are known.
\begin{lemma}\label{Lem2.Lyap} Let \eqref{2.one} be satisfied and let $\alpha=0$. Then, for every $u_+\in C_b(\R_-,H)$ there is a unique solution $u_-\in C_b(\R_-,H)$ of the second equation of \eqref{2.sys}. Moreover, the solution operator $\Phi:C_b(\R_-,H)\to C_b(\R_-,H)$ is Lipschitz continuous in the following sense:
\begin{equation}\label{2.llip}
\|\Phi(u_+^1)-\Phi(u_+^2)\|_{C_{e^{\beta t}}(\R_-,H)}\le K\|u_+^1-u_+^2\|_{C_{e^{\beta t}}(\R_-,H)}
\end{equation}
for some positive $\beta$ and $K$ which are independent of $u_+^1,u_+^2\in C_b(\R_-,H)$.
\end{lemma}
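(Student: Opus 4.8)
The plan is to rewrite the second equation of \eqref{2.sys} as an integral equation on $\R_-$ via the variation of constants formula and to solve it by the contraction mapping principle in a weighted space of bounded $H$-valued functions, with \eqref{2.one} entering precisely as the condition that makes the relevant operator a contraction. Since $A=A^*>0$ and $Q_N$ is the orthoprojector onto $\spann\{e_n:n\ge N+1\}$, one has $\|e^{-At}Q_N\|_{\Cal L(H)}\le e^{-\lambda_{N+1}t}$ for all $t\ge0$, so any $u_-\in C_b(\R_-,H)$ solving
\begin{equation}\label{p.eq}
\frac d{dt}u_-(t)+Au_-(t)=Q_Nf(u_+(t)+u_-(t))+g_-,\qquad t\le0,
\end{equation}
must, by Duhamel's formula on $[t_0,t]$ and the limit $t_0\to-\infty$ (in which $\|e^{-A(t-t_0)}u_-(t_0)\|_H\le e^{-\lambda_{N+1}(t-t_0)}\sup_{s\le0}\|u_-(s)\|_H\to0$), satisfy
\begin{equation}\label{p.int}
u_-(t)=\int_{-\infty}^t e^{-A(t-s)}Q_N\(f(u_+(s)+u_-(s))+g_-\)\,ds,\qquad t\le0;
\end{equation}
the integral converges absolutely because, by the first line of \eqref{2.lip} with $\alpha=0$, its integrand is bounded in $H$ by $(C+\|g_-\|_H)e^{-\lambda_{N+1}(t-s)}$. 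Conversely, a continuous bounded solution of \eqref{p.int} is a mild, hence (by parabolic smoothing) classical, solution of \eqref{p.eq}, so it suffices to solve \eqref{p.int}.

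Next I would fix $\beta\in(0,\lambda_{N+1}-L)$---non-empty by \eqref{2.one} with $\alpha=0$---and work in $C_{e^{\beta t}}(\R_-,H)$ with norm $\|w\|_\beta:=\sup_{t\le0}e^{\beta t}\|w(t)\|_H$. Let $\Cal T=\Cal T_{u_+}$ send $w$ to the right-hand side of \eqref{p.int}. Since $f$ is globally bounded, $\|\Cal T w\|_{C_b(\R_-,H)}\le(C+\|g_-\|_H)/\lambda_{N+1}$ for every $w$, so $\Cal T$ maps $C_{e^{\beta t}}(\R_-,H)$ into $C_b(\R_-,H)$ (continuity in $t$ is standard). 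Using the Lipschitz bound in \eqref{2.lip} and $\|w(s)\|_H\le e^{-\beta s}\|w\|_\beta$,
\begin{equation}\label{p.contr}
e^{\beta t}\|\Cal T w_1(t)-\Cal T w_2(t)\|_H\le L\|w_1-w_2\|_\beta\int_{-\infty}^t e^{-(\lambda_{N+1}-\beta)(t-s)}\,ds=\frac{L}{\lambda_{N+1}-\beta}\,\|w_1-w_2\|_\beta,
\end{equation}
and $L/(\lambda_{N+1}-\beta)<1$, so $\Cal T$ is a contraction on $C_{e^{\beta t}}(\R_-,H)$; its unique fixed point is the desired $u_-=\Phi(u_+)$, which lies in $C_b(\R_-,H)$ by the above bound, and since $C_b(\R_-,H)\subset C_{e^{\beta t}}(\R_-,H)$ this also yields uniqueness of $u_-$ within $C_b(\R_-,H)$.

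Finally, for \eqref{2.llip} I would subtract the identities \eqref{p.int} for $u_+^1,u_+^2$, set $u_-^i:=\Phi(u_+^i)$, and repeat the computation of \eqref{p.contr} with $\|f(u_+^1+u_-^1)-f(u_+^2+u_-^2)\|_H\le L(\|u_+^1-u_+^2\|_H+\|u_-^1-u_-^2\|_H)$, obtaining $\|u_-^1-u_-^2\|_\beta\le\frac{L}{\lambda_{N+1}-\beta}\(\|u_+^1-u_+^2\|_\beta+\|u_-^1-u_-^2\|_\beta\)$; absorbing the last term on the left gives \eqref{2.llip} with $K=L/(\lambda_{N+1}-\beta-L)$. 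The only step that needs real care is the equivalence of \eqref{p.eq} and \eqref{p.int}---justifying the limit $t_0\to-\infty$ and the regularity of the mild solution---and, in the general case $\alpha\in[0,1)$, the replacement of the crude bound on $\|e^{-At}Q_N\|$ by the smoothing estimate $\|A^{\alpha/2}e^{-At}Q_N\|_{\Cal L(H)}\le C_\alpha(t^{-\alpha/2}+1)e^{-\lambda_{N+1}t}$; since $\alpha<1$ the integral $\int_0^\infty(r^{-\alpha/2}+1)e^{-(\lambda_{N+1}-\beta)r}\,dr$ is still finite, after which the contraction argument runs verbatim in the $H^\alpha$-valued weighted space. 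Everything else is routine.
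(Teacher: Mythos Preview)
Your argument is correct and complete, but it follows a genuinely different route from the paper. The paper constructs $u_-$ by solving the second equation of \eqref{2.sys} on finite intervals $[-M,0]$ with zero initial data at $t=-M$, obtains uniform bounds via energy estimates (multiplying by $u_-$ and using \eqref{2.one}), shows the resulting family $\{u_{-,M}\}$ is Cauchy in $C_{loc}(\R_-,H)$, and passes to the limit; uniqueness and \eqref{2.llip} then come from the same differential inequality applied to the difference of two solutions. You instead use the Lyapunov--Perron viewpoint: recast the equation as the integral identity \eqref{p.int} via Duhamel and the decay $\|e^{-At}Q_N\|\le e^{-\lambda_{N+1}t}$, and apply the Banach fixed point theorem directly in the weighted space $C_{e^{\beta t}}(\R_-,H)$, with the global boundedness of $f$ guaranteeing that the fixed point actually lands in $C_b(\R_-,H)$. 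Your route is shorter, yields the Lipschitz constant $K=L/(\lambda_{N+1}-\beta-L)$ explicitly, and avoids the approximation/Cauchy-sequence step; the paper's energy-estimate approach, on the other hand, is more robust to settings where a clean semigroup representation is unavailable and extends verbatim to operators that are merely sectorial rather than self-adjoint. Both are standard and equally valid here.
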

\begin{proof} For a given $u_+\in C_b(\R_-,H)$ let us consider the following problem
\begin{equation}\label{2.app}
\Dt u_-+Au_--Q_Nf(u_-+u_+)=g,\ \ u_-\big|_{t=-M}=0,
\end{equation}
where $M>0$ is fixed.
Obviously, this problem has a unique solution $u_{-,M}\in C([-M,0],H)$. Moreover, multiplying equation \eqref{2.app} scalarly in $H$ by $u_-$, we get
\begin{multline}
\frac12\frac d{dt}\|u_-\|^2_H+\lambda_{N+1}\|u_-\|^2_H\le\\\le (f(u_-+u_+)-f(0),u_-)+(f(0)+g,u_-)\le \\\le L\|u_-\|^2_H+L\|u_+\|_H\|u_-\|_H+(\|f(0)\|_H+\|g\|_H)\|u_-\|_H.
\end{multline}
Using \eqref{2.one}, we arrive at
$$
\frac d{dt}\|u_-(t)\|^2_H+\beta\|u_-(t)\|^2_H\le C(1+\|g\|^2_H+\|u_+(t)\|^2_H)
$$
for some positive constants $\beta$ and $C$. Integrating this inequality, we see that
$$
\|u_-(t)\|_{H}^2\le C_1(1+\|g\|^2_H)+C_1\int_{-M}^te^{-\beta(t-s)}\|u_+(s)\|^2_H\,ds,\ \ t\ge -M.
$$
This estimate in particular shows that the functions $u_{-,M}(t)$ are uniformly bounded with respect to $M$ (since $u_+\in C_b(\R_-,H)$). We claim that the sequence $\{u_{-,M}\}_{M=1}^\infty$ is  Cauchy in $C_{loc}(\R_-,H)$. Indeed, let $M_1>M_2$ and $v_{M_1,M_2}(t):=u_{-,M_1}(t)-u_{-,M_2}(t)$. Then this function solves the equation
\begin{multline}
\Dt v_{M_1,M_2}+Av_{M_1,M_2}=Q_N[f(u_{-,M_1}+u_+)-f(u_{-,M_2}+u_+)],\\ v_{M_1,M_2}\big|_{t=-M_2}=u_{-,M_1}(-M_2).
\end{multline}
Multiplying this equation by $v_{M_1,M_2}$ and arguing as in the proof of Proposition \ref{Prop2.modes} using that $u_{-,M_1}(-M_2)$ is uniformly bounded with respect to $M_1$ and $M_2$, we infer
$$
\|v_{M_1,M_2}(t)\|_H^2\le Ce^{-\beta(t+M_2)},\ \ t\ge-M_2,
$$
 so $u_{-,M}(t)$ is indeed a Cauchy sequence. Passing now to the limit $M\to\infty$, we get the desired solution $u_-\in C_b(\R_-,H)$ of the second equation of \eqref{2.sys}. Thus, it only remains to verify the uniqueness and estimate \eqref{2.llip}. To this end, we take two solutions $u_-^1(t)$ and $u_-^2(t)$ which correspond to different functions $u_+^1(t)$ and $u_+^2(t)$ and take their difference $v(t)=u_-^1(t)-u_-^2(t)$. Writing out the equation for $v(t)$ and arguing as in the proof of Proposition \ref{Prop2.modes}, we end up with the following inequality:
 \begin{equation}\label{2.dif-est}
 \frac d{dt}\|v(t)\|^2_H+\beta\|v(t)\|^2_H\le C\|u_+^1(t)-u_+^2(t)\|_H^2.
 \end{equation}
 Integrating this inequality, we get
 \begin{multline}
 \|v(t)\|^2_H\le \|u_+^1(-M)-u_+^2(-M)\|_H^2e^{-\beta(t+M)}+\\+
 C_\beta\int_{-M}^te^{-\beta(t-s)}\|u_+^1(s)-u_+^2(s)\|_H^2\,ds.
\end{multline}
Using now that all functions involved are bounded as $t\to-\infty$, we may pass to the limit $M\to\infty$ in the last inequality and get
\begin{multline}
 \|v(t)\|^2_H\le C_\beta\int_{-\infty}^te^{-\beta(t-s)}\|u_+^1(s)-u_+^2(s)\|_H^2\,ds\le \\\le C'_\beta\sup_{s\in\R_-}\left\{e^{-\beta(t-s)/2}\|u_+^1(s)-u_+^2(s)\|_H^2\right\}
\end{multline}
which gives the desired estimate \eqref{2.llip} and finishes the proof of the lemma.
\end{proof}
Thus, according to Lemma \ref{Lem2.Lyap}, the value of $u_-(t)$ can be found by the map $\Phi$ if the trajectory $u_+(s)$, $s\le t$ is known. Namely
$$
u_-(t)=\Phi((u_+)_t)(0):=\Phi_0((u_+)_t),
$$
where the function $(u_+)_t\in C_b(\R_-,H)$ is given by $(u_+)_t(s):=u_+(t+s)$. This allows us to reduce the initial equation \eqref{2.abs} at least on the attractor to the following {\it delayed} system of $N$ ODEs:
\begin{equation}\label{2.dode}
\frac d{dt} u_+(t)+Au_+(t)-P_Nf(u_+(t)+\Phi_0((u_+)_t))=g.
\end{equation}
\begin{remark}\label{Rem2.nd} We see that determining modes (at least in a dissipative system \eqref{2.abs}) are responsible for the reduction of the initial PDE to a system of  ODEs with {\it delay} (DDEs) where the number of equations in the reduced system is equal to $N$. Note that the phase  space for the obtained system of DDEs remains {\it infinite-dimensional} (since we have an infinite delay in \eqref{2.dode}, it is natural to take the space $C_b(\R_-,H)$ with the topology of $C_{loc}(\R_-,H)$ as the phase space of this problem, see \cite{ha} and references therein). Thus, determining modes {\it do not produce} any finite-dimensional reduction and only reduce the initial PDE to the system of DDEs whose dynamics is still a priori infinite-dimensional. In particular, the number $N$ or the determining dimension {\it is not a priori} related to the effective number of degrees of freedom of the dissipative system considered. We also mention that more advanced technique related to {\it inertial manifolds} allows us to express $u_-(t)$ through $u_+(t)$ as a local in time function without  delay
$$
u_-(t)=\Phi_0(u_+(t)),\ \ \Phi_0\,:\, P_NH\to Q_NH
$$
(under further rather restrictive assumptions on $A$ and $f$, see \cite{BV92} for more details). In this case,  equation \eqref{2.dode} become a system of ODEs without delay and the finite-dimensional reduction holds.
\end{remark}
We conclude this section by one more related result which allows us to restore the trajectory $u(t)$ in "real time" if the values of its determining modes are known (say, from observations) and which may be useful for data assimilation, see \cite{AT13,OT08} and references therein.

\begin{proposition}\label{Prop2.assym} Let $u(t)$ be a given trajectory of the dynamical system generated by equation \eqref{2.abs} and let condition \eqref{2.one} be satisfied. Let the constant $K$ be large enough and let $v(t)$ solve the following problem:
\begin{equation}\label{2.as}
\frac d{dt}v(t)+Av(t)-f(v(t))+K(P_N v(t)-P_N u(t))=g.
\end{equation}
Then, the following estimate holds:
\begin{equation}\label{2.conv}
\|v(t))-u(t)\|_H\le \|v(0)-u(0)\|_He^{-\beta t}, \ \ t\ge0
\end{equation}
for some positive constant $\beta$ which is independent of $u$ and $v$.
\end{proposition}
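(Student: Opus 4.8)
The plan is to establish \eqref{2.conv} by a standard Lyapunov-type (``nudging'') energy estimate for the difference $w(t):=v(t)-u(t)$, exploiting the dissipativity of the high Fourier modes guaranteed by \eqref{2.one} together with the dissipativity of the low modes produced by the feedback term $K(P_Nv-P_Nu)$. First I would note that \eqref{2.as} is globally well posed: the additional term $v\mapsto KP_Nv$ is linear and bounded, and $f$ is globally Lipschitz from $H^\alpha$ to $H^{-\alpha}$ by \eqref{2.lip}, so for every $v(0)\in H$ there is a unique solution $v\in C([0,\infty),H)$ with the usual parabolic smoothing ($v\in L^2_{loc}((0,\infty),H^1)$), which makes the computation below rigorous on $(0,\infty)$ and, by continuity of the $H$-norm, down to $t=0$.

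Subtracting equation \eqref{2.abs} for $u$ from \eqref{2.as} for $v$, the function $w=v-u$ solves
$$\Dt w+Aw-\bigl(f(v)-f(u)\bigr)+KP_Nw=0.$$
Taking the inner product in $H$ with $w$ and using $(Aw,w)=\|w\|_{H^1}^2$, $(KP_Nw,w)=K\|P_Nw\|_H^2$, and $|(f(v)-f(u),w)|\le\|f(v)-f(u)\|_{H^{-\alpha}}\|w\|_{H^\alpha}\le L\|w\|_{H^\alpha}^2$ by \eqref{2.lip}, I obtain
$$\tfrac12\ddt\|w\|_H^2+\|w\|_{H^1}^2+K\|P_Nw\|_H^2\le L\|w\|_{H^\alpha}^2.$$
Now I would split $w=P_Nw+Q_Nw$ and use the spectral inequalities $\|Q_Nw\|_{H^1}^2\ge\lambda_{N+1}^{1-\alpha}\|Q_Nw\|_{H^\alpha}^2$ and $\|P_Nw\|_{H^\alpha}^2\le\lambda_N^\alpha\|P_Nw\|_H^2$; together with $\|w\|_{H^1}^2\ge\|Q_Nw\|_{H^1}^2$ and $\|w\|_{H^\alpha}^2=\|P_Nw\|_{H^\alpha}^2+\|Q_Nw\|_{H^\alpha}^2$ this gives
$$\tfrac12\ddt\|w\|_H^2+\bigl(\lambda_{N+1}^{1-\alpha}-L\bigr)\|Q_Nw\|_{H^\alpha}^2+\bigl(K-L\lambda_N^\alpha\bigr)\|P_Nw\|_H^2\le0.$$
By \eqref{2.one} the first coefficient is positive regardless of $K$, and choosing $K>L\lambda_N^\alpha$ makes the second positive too. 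Since $\|Q_Nw\|_{H^\alpha}^2\ge\lambda_{N+1}^\alpha\|Q_Nw\|_H^2$ and $\|w\|_H^2=\|P_Nw\|_H^2+\|Q_Nw\|_H^2$, this yields $\ddt\|w\|_H^2+2\beta\|w\|_H^2\le0$ with $\beta:=\min\{(\lambda_{N+1}^{1-\alpha}-L)\lambda_{N+1}^\alpha,\,K-L\lambda_N^\alpha\}>0$, and Gronwall's inequality produces \eqref{2.conv}.

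The only slightly delicate point is the bookkeeping in the mode decomposition: after redistributing the Lipschitz term $L\|w\|_{H^\alpha}^2$ between low and high modes one must be left with strictly positive coefficients in front of both $\|Q_Nw\|_{H^\alpha}^2$ and $\|P_Nw\|_H^2$. This works out because \eqref{2.one} already absorbs the infinitely many high modes irrespective of $K$, so the feedback parameter $K$ is free to be taken as large as needed to dominate the finitely many low modes; I do not anticipate any genuine obstacle beyond this.
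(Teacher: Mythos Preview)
Your argument is correct and follows essentially the same route as the paper's proof: subtract the equations, test with $w=v-u$, split into low and high Fourier modes, and use the spectral inequalities $\|Q_Nw\|_{H^1}^2\ge\lambda_{N+1}^{1-\alpha}\|Q_Nw\|_{H^\alpha}^2$ and $\|P_Nw\|_{H^\alpha}^2\le\lambda_N^\alpha\|P_Nw\|_H^2$ to arrive at $\tfrac12\ddt\|w\|_H^2+\beta\|w\|_H^2\le0$ with the same threshold $K>L\lambda_N^\alpha$. The only cosmetic difference is that the paper retains the term $\|P_Nw\|_{H^1}^2$ in the intermediate display before discarding it, whereas you drop it one line earlier; the resulting estimate and the explicit $\beta$ you give coincide with the paper's.
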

\begin{proof} Indeed, let $w(t):=v(t)-u(t)$. Then this function solves
$$
\Dt w(t)+Aw(t)-[f(v(t))-f(u(t))]+K P_Nw(t)=0.
$$
Taking a scalar product of this equation with $w(t)$ in $H$ and using the Lipschitz continuity of $f$, we get
$$
\frac12\frac d{dt}\|w(t)\|^2_H+\|w(t)\|_{H^1}^2-L\|w(t)\|^2_{H^\alpha}+K\|P_Nw(t)\|^2_H\le0.
$$
Moreover, arguing as in the proof of Proposition \ref{Prop2.modes}, we get
\begin{multline}
\|w\|_{H^1}^2-L\|w\|^2_{H^\alpha}+K\|P_Nw\|^2_H=(\|Q_Nw\|_{H^1}^2-L\|Q_Nw\|^2_{H^\alpha})+\\+
(\|P_Nw\|_{H^1}^2-L\|P_Nw\|_{H^\alpha}^2+K\|P_Nw\|^2_H)\ge\\\ge
(\lambda_{N+1}^{1-\alpha}-L)\|Q_Nw\|^2_{H^\alpha}+(K-L\lambda_N^\alpha)\|P_Nw\|_{H}^2\ge \beta\|w\|^2_H
\end{multline}
for some positive $\beta$ if $K>L\lambda^\alpha$. This gives the estimate
$$
\frac12\frac d{dt}\|w(t)\|^2_H+\beta\|w(t)\|^2_H\le0
$$
and finishes the proof of the proposition.
\end{proof}
\begin{remark} As we will see below, the results which are similar (although slightly more complicated) to Lemma \ref{Lem2.Lyap} and Proposition \ref{Prop2.assym} hold for a general system of determining functionals.
\end{remark}

\section{The Takens delay embedding theorem and determining functionals}\label{s3}
In this section, we prove that at least for the case of equations \eqref{2.abs} and generic external forces $g$, the determining dimension of the corresponding DS is one. We start with recalling two preliminary results.
\begin{proposition}\label{Prop3.gen} Let the operator $A$ and the nonlinearity $f$ satisfy the assumptions of Section \ref{s2}.
 Then, there is a dense  in $H^{-\alpha}$ set of external forces $g$ for which the corresponding set $\Cal R_g$ of equilibria is finite.
\end{proposition}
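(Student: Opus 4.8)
\emph{Proof proposal.} The plan is to reduce the stationary equation to finitely many dimensions and then invoke a Sard-type statement that survives for merely Lipschitz maps. Recall that $\Cal R_g$ consists exactly of the solutions $u\in H^\alpha$ of $Au-f(u)=g$, equivalently of $u=A^{-1}\big(f(u)+g\big)$. Since $f$ is bounded in $H^{-\alpha}$ by \eqref{2.lip} and $A^{-1}:H^{-\alpha}\to H^{2-\alpha}$ is bounded with the embedding $H^{2-\alpha}\hookrightarrow H^\alpha$ compact, every $u\in\Cal R_g$ obeys the a priori bound $\|u\|_{H^\alpha}\le C_0\big(1+\|g\|_{H^{-\alpha}}\big)$; in particular $\Cal R_g$ is compact and stays inside a fixed ball of $H^\alpha$ whenever $g$ ranges over a bounded set. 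The one genuine obstacle is that $f$ is assumed only globally Lipschitz, so $u\mapsto Au-f(u)$ fails to be $C^1$ and the Sard--Smale theorem is not available; I will bypass this by a Lyapunov--Schmidt reduction to a finite-dimensional problem, followed by the area formula for Lipschitz maps, which yields finiteness of the solution set for \emph{almost every} value of a finite-dimensional parameter --- and almost-everywhere is all one needs for a density statement.

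\emph{Step 1 (Lyapunov--Schmidt reduction).} First I would fix, once and for all, an $N\in\N$ with $\lambda_{N+1}^{1-\alpha}>L$; such $N$ exists because $\lambda_n\to\infty$, and it depends on $f$ only through the Lipschitz constant $L$ in \eqref{2.lip}. Writing $u=p+q$ with $p:=P_Nu$ and $q:=Q_Nu$, the equation $Au-f(u)=g$ becomes the pair
\[
Ap-P_Nf(p+q)=P_Ng,\qquad Aq-Q_Nf(p+q)=Q_Ng .
\]
For fixed $p\in P_NH$ and $g^-:=Q_Ng$, the map $q\mapsto A^{-1}Q_N\big(f(p+q)+g^-\big)$ is a contraction on $Q_NH^\alpha$: arguing as in the proof of Lemma \ref{Lem2.Lyap} (the extension from $\alpha=0$ to $\alpha\in[0,1)$ being routine, via $\|A^{-1}Q_Nh\|_{H^\alpha}\le\lambda_{N+1}^{\alpha-1}\|h\|_{H^{-\alpha}}$) its Lipschitz constant is at most $L\lambda_{N+1}^{\alpha-1}<1$. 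Hence the second equation has a unique solution $q=\Psi(p,g^-)$ with $\Psi$ globally Lipschitz, and $u=p+\Psi(p,g^-)$ solves the stationary equation if and only if $p$ solves the finite-dimensional equation
\[
G_{g^-}(p):=Ap-P_Nf\big(p+\Psi(p,g^-)\big)=P_Ng ,
\]
where $G_{g^-}:P_NH\to P_NH$ is globally Lipschitz, the norms of $H^{-\alpha}$, $H$ and $H^\alpha$ being all equivalent on the $N$-dimensional space $P_NH\cong\R^N$. Thus $\Cal R_g$ is in one-to-one correspondence with the fibre $G_{g^-}^{-1}(P_Ng)\subset\R^N$, and by the a priori bound this fibre lies in a fixed ball $\Omega\subset\R^N$ whose radius depends only on a bound for $\|g\|_{H^{-\alpha}}$.

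\emph{Step 2 (finiteness for a.e.\ value, then density).} Next I would freeze $g^-$. By Rademacher's theorem $G_{g^-}$ is differentiable a.e.\ with $|\det DG_{g^-}|\le\big(\operatorname{Lip}G_{g^-}\big)^N$, so the classical area formula for Lipschitz maps between equidimensional Euclidean spaces gives
\[
\int_{\R^N}\#\big(G_{g^-}^{-1}(w)\cap\Omega\big)\,dw=\int_{\Omega}|\det DG_{g^-}(p)|\,dp\le\big(\operatorname{Lip}G_{g^-}\big)^N\,|\Omega|<\infty ,
\]
whence $\#\big(G_{g^-}^{-1}(w)\cap\Omega\big)<\infty$ for Lebesgue-a.e.\ $w\in\R^N$. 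Since every solution of $G_{g^-}(p)=w$ already lies in $\Omega$, this means $\Cal R_{g^-+w}$ is finite for a.e.\ $w\in P_NH$. Density in $H^{-\alpha}$ follows at once: given $g_0\in H^{-\alpha}$ and $\eb>0$, keep $g^-:=Q_Ng_0$ and pick $w$ in this full-measure set with $\|w-P_Ng_0\|_{H^{-\alpha}}<\eb$; then $g:=g^-+w$ satisfies $\|g-g_0\|_{H^{-\alpha}}<\eb$ and $\Cal R_g$ is finite.

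The crux, as flagged above, is the absence of $C^1$ regularity of $f$: it forbids a direct Sard--Smale argument on the infinite-dimensional map $u\mapsto Au-f(u)$, and the Lyapunov--Schmidt step is exactly what moves the problem into a finite-dimensional setting where the Lipschitz version of Sard's theorem, via the area formula, still applies. I would also note that, since $N$ depends only on $f$ and, for each frozen $g^-$, almost every $P_Ng$ is admissible, the same argument shows that the set of good $g$ is in fact prevalent in $H^{-\alpha}$ (its complement is shy, with transverse measure equal to Lebesgue measure on $P_NH$); this is slightly stronger than the asserted density, which is all that is needed in the sequel.
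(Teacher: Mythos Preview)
Your proof is correct and, in fact, considerably more careful than what the paper provides: the paper disposes of this proposition in a single line, ``Indeed, this is a standard corollary of the Sard lemma, see e.g.\ \cite{BV92} for more details,'' with no further argument.

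The route you take is worth highlighting. A direct Sard--Smale argument on $u\mapsto Au-f(u)$ (which is presumably what the reference has in mind) requires $f$ to be at least $C^1$, whereas the hypotheses of Section~\ref{s2} give only global Lipschitz continuity. You correctly flag this gap and close it by (i) using the spectral condition $\lambda_{N+1}^{1-\alpha}>L$ to perform a Lyapunov--Schmidt reduction, collapsing the stationary problem to a Lipschitz map $G_{g^-}:\R^N\to\R^N$, and (ii) replacing the classical Sard theorem by the area formula for Lipschitz maps between equidimensional spaces, which yields finiteness of the fibre for a.e.\ value rather than merely a measure-zero critical set. This is a genuine strengthening under the stated assumptions, and your observation that the good set of $g$ is in fact prevalent (with probe space $P_NH$) is a nice bonus. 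The paper's one-line proof, by contrast, tacitly relies on the extra smoothness typically assumed in the cited reference.
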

Indeed, this is a standard corollary of the Sard lemma, see e.g. \cite{BV92} for more details.
\begin{proposition}\label{Prop3.per} Let the operator $A$ and the nonlinearity $f$ satisfy the assumptions of Section \ref{s2}. Then there exists $T_0>0$ depending on $A$ and $f$ only such that the period $T$ of any non-trivial periodic orbit $u(t)$ of problem \eqref{2.abs} is not smaller than $T_0$ (i.e. $T\ge T_0$).
\end{proposition}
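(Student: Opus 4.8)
The plan is to establish a Yorke-type lower bound on periods: if $u(t)$ is a non-constant $T$-periodic solution of \eqref{2.abs}, then $T$ cannot be arbitrarily small. A periodic orbit is a complete bounded trajectory, hence lies on the attractor; in particular the standard parabolic energy estimate gives $u\in L^2_{loc}(\R;H^1)$ with $\partial_t u\in L^2_{loc}(\R;H^{-1})$, and this minimal regularity is all that will be used.

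First I would pass to finite differences. For $h\ne0$ set $v_h(t):=\tfrac1h\bigl(u(t+h)-u(t)\bigr)$. Since the equation is autonomous and linear in its principal part, $v_h$ is again $T$-periodic and solves the linear non-autonomous equation $\partial_t v_h+Av_h=g_h$ with $g_h(t):=\tfrac1h\bigl(f(u(t+h))-f(u(t))\bigr)$, and assumption \eqref{2.lip} gives the pointwise bound $\|g_h(t)\|_{H^{-\alpha}}\le L\|v_h(t)\|_{H^{\alpha}}$. Two elementary remarks: (i) by $T$-periodicity of $u$ (and of $f(u)$) one has $\int_0^T v_h(t)\,dt=0$ and $\int_0^T g_h(t)\,dt=0$; (ii) since $u$ is non-constant, $v_h\not\equiv0$ for at least one $h$ (indeed for a sequence $h\to0$), because $v_h\equiv0$ for such a sequence would force $u$ to be constant. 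Fix such an $h$.

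The core is then a Parseval computation in the spirit of Wirtinger's inequality. Expand $v_h$ and $g_h$ on $[0,T]$ in the time-Fourier basis $e^{2\pi ikt/T}$ and the eigenbasis $\{e_n\}$ of $A$; writing $v_h=\sum_{k\ne0,\,n}c_{k,n}e^{2\pi ikt/T}e_n$ (the block $k=0$ is absent by zero mean), the equation becomes $\bigl(\tfrac{2\pi ik}{T}+\lambda_n\bigr)c_{k,n}=\hat g_{k,n}$. Substituting into $\|g_h\|^2_{L^2(0,T;H^{-\alpha})}\le L^2\|v_h\|^2_{L^2(0,T;H^{\alpha})}$ and using Parseval yields
\begin{equation*}
\sum_{k\ne0,\,n}|c_{k,n}|^2\,\lambda_n^{-\alpha}\Bigl(L^2\lambda_n^{2\alpha}-\tfrac{4\pi^2k^2}{T^2}-\lambda_n^2\Bigr)\ \ge\ 0 .
\end{equation*}
Since $v_h\not\equiv0$ and has zero mean, some coefficient $c_{k,n}$ with $k\ne0$ is nonzero, so the corresponding bracket is nonnegative, whence
\[
\frac{4\pi^2}{T^2}\ \le\ \frac{4\pi^2k^2}{T^2}\ \le\ L^2\lambda_n^{2\alpha}-\lambda_n^2\ \le\ m_0:=\sup_{s>0}\bigl(L^2s^{2\alpha}-s^2\bigr).
\]
A short calculus check shows that $m_0$ is finite, strictly positive, and depends only on $L$ and $\alpha$ (it equals $L^2$ when $\alpha=0$ and $L^2(1-\alpha)(\alpha L^2)^{\alpha/(1-\alpha)}$ when $\alpha\in(0,1)$). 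Therefore $T\ge 2\pi/\sqrt{m_0}=:T_0$, which is the asserted bound.

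The only genuine obstacle is the low regularity: because $f$ is merely globally Lipschitz and $g\in H^{-\alpha}$, one must justify that $v_h\in L^2(0,T;H^{\alpha})$, that $g_h\in L^2(0,T;H^{-\alpha})$, and that the identity $\partial_t v_h+Av_h=g_h$ holds in $L^2(0,T;H^{-1})$ so that the Fourier-coefficient relations and Parseval identities above are legitimate. This follows from the parabolic smoothing estimate recalled above together with the embedding $H^1\hookrightarrow H^{\alpha}$ for $\alpha<1$; working with finite differences rather than with $\partial_t u$ directly is precisely what avoids having to differentiate the Lipschitz map $t\mapsto f(u(t))$. The remaining manipulations are routine Parseval bookkeeping.
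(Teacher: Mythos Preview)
Your argument is correct. The paper itself does not prove this proposition; it simply refers to \cite{R11}, so there is no in-paper proof to compare against. Your approach is precisely the Yorke-type lower bound on periods, carried out via time-Fourier expansion combined with the spectral decomposition of $A$, and this is essentially the route taken in \cite{R11} as well. One minor point of phrasing: after obtaining
\[
\sum_{k\ne0,\,n}|c_{k,n}|^2\,\lambda_n^{-\alpha}\Bigl(L^2\lambda_n^{2\alpha}-\tfrac{4\pi^2k^2}{T^2}-\lambda_n^2\Bigr)\ \ge\ 0,
\]
you write that ``some coefficient $c_{k,n}$ with $k\ne0$ is nonzero, so the corresponding bracket is nonnegative''. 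As written this reads like a non sequitur; what you mean (and what is true) is that since all weights $|c_{k,n}|^2\lambda_n^{-\alpha}$ are nonnegative and at least one is strictly positive, at least one bracket with nonzero weight must be $\ge0$, and for that index $|k|\ge1$ gives the bound. This is a cosmetic fix, not a gap. Your device of working with finite differences $v_h$ rather than $\partial_t u$ is a clean way to sidestep the regularity issue arising from the merely Lipschitz $f$, and the resulting bound $T_0=2\pi/\sqrt{m_0}$ depends only on $L$ and $\alpha$, which is even slightly sharper than the statement requires.
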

For the proof of this fact, see \cite{R11}.
\par
We are now ready to state a version of the Takens delayed embedding theorem for the DS $S(t)$ generated by problem \eqref{2.abs} which is our main technical tool in this section.
\begin{theorem}\label{Th3.Tkn} Let $\Cal A$ be the global attractor of equation \eqref{2.abs} and let the natural number
$$
k\ge(2+\dim_B(\Cal A))\dim_B(\Cal A)+1
$$
be fixed. Assume also that the set $\Cal R$ of equilibria is finite and $\tau>0$ is fixed in such a way that $k\tau\ge T_0$ (where $T_0$ is the same as in Proposition \ref{Prop3.per}). Then there exists a dense (actually a prevalent) set of Lipschitz maps $F:H\to\R$ such that the map
\begin{equation}\label{3.tak}
F(k,u):=(F(u),F(S(\tau)u),\cdots,F(S((k-1)\tau)u))
\end{equation}
is one-to-one on the attractor $\Cal A$. Moreover, the map $F$ can be chosen from the class of polynomials of degree $2k$.
\end{theorem}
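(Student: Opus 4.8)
The plan is to work entirely within the prevalence framework of Hunt--Sauer--Yorke: to see that a given property holds for a prevalent set of $F$ in the space of Lipschitz maps $H\to\R$, it is enough to exhibit one fixed \emph{finite}-dimensional ``probe'' subspace $\PP\subset C(H,\R)$ with the feature that, for \emph{every} Lipschitz $F$ and Lebesgue-almost every $P\in\PP$, the map $F+P$ has the desired property. Here I would take $\PP$ to be the (finite-dimensional) space of all polynomials on $H$ of degree at most $2k$, and the property to be that the delay map
\[
D_{F+P}(u):=\bigl((F+P)(u),(F+P)(S(\tau)u),\dots,(F+P)(S((k-1)\tau)u)\bigr)
\]
is injective on $\Cal A$. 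Since $S(t)$ is Lipschitz on $H$ (hence on $\Cal A$) for every $t\ge0$ by the global Lipschitz bound \eqref{2.lip}, and $\dim_B(\Cal A,H)=:d<\infty$, all the maps above are genuinely Lipschitz on the compact set $\Cal A$, so the scheme applies; moreover, taking $F\equiv 0$ together with any admissible $P$ gives at the same time the last assertion of the theorem, that a suitable observable may be chosen polynomial of degree $\le 2k$.

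Now fix a Lipschitz $F$. For $u\ne v$ in $\Cal A$ put $w_j:=S(j\tau)u$ and $w_j':=S(j\tau)v$, $j=0,\dots,k-1$; then $D_{F+P}(u)=D_{F+P}(v)$ is equivalent to $P$ lying in the affine subspace $\NN(u,v):=\{P\in\PP\st P(w_j)-P(w_j')=F(w_j')-F(w_j),\ j=0,\dots,k-1\}$ of $\PP$, and the bad parameter set is $\mathfrak B:=\bigcup_{u\ne v}\NN(u,v)$, which has to be shown Lebesgue-null in $\PP$. I would decompose $\Cal A\times\Cal A$ into strata according to the coincidence pattern of the $2k$ sampled points: (a) pairs of distinct equilibria; (b) pairs with exactly one equilibrium; (c) pairs of non-equilibria, further refined by which of the $w_j,w_j'$ coincide. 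Two general facts feed into each stratum. First, for any $m$ \emph{distinct} points of $H$ the evaluation map $P\mapsto(P(x_1),\dots,P(x_m))$ is already onto $\R^m$ on polynomials of degree $\le m-1$ --- a Lagrange-type construction, using that continuous linear functionals separate points of $H$ by Hahn--Banach --- so the codimension of $\NN(u,v)$ in $\PP$ equals the number of \emph{independent} constraints among the $k$ equations $P(w_j)=P(w_j')$, a purely combinatorial quantity read off from the coincidence pattern. Second, each stratum has box-counting dimension $\le 2d$, inherited from $\dim_B(\Cal A)=d$ and the Lipschitz continuity of the $S(j\tau)$ on $\Cal A$. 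The standard parameter-counting lemma in the spirit of Sauer--Yorke--Casdagli then shows that the portion of $\mathfrak B$ lying over a stratum is Lebesgue-null in $\PP$ once that codimension strictly exceeds $2d$, and the hypothesis $k\ge(2+d)d+1$ is exactly what makes this hold on every stratum simultaneously; a finite sum over the coincidence patterns gives $|\mathfrak B|=0$.

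On stratum (a) there is nothing combinatorial to do: $\Cal R$ is finite and polynomials (indeed linear functionals) separate its points, so $\mathfrak B$ meets the closure of this stratum in a finite union of proper affine subspaces. The heart of the matter --- and the step I expect to be the main obstacle --- is the book-keeping on strata (b) and (c), i.e.\ controlling how badly the sampled points can coincide, and hence how much codimension can be lost, on a periodic trajectory. This is precisely where Propositions \ref{Prop3.gen} and \ref{Prop3.per} enter: the finiteness of $\Cal R$ eliminates whole families of degenerate pairs, while the lower bound $T_0$ on the periods of nontrivial periodic orbits, together with the normalisation $k\tau\ge T_0$ --- and, if necessary, a choice of $\tau$ outside the discrete set of resonant values at which a sampled orbit would collapse onto too few points --- forces any periodic trajectory to be resolved by enough genuinely distinct samples inside the window $[0,(k-1)\tau]$ for the surviving constraints still to outnumber $2d$. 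I would package these combinatorial and dynamical estimates as a self-contained lemma; once it is available, the prevalence conclusion, and with it Theorem \ref{Th3.Tkn}, follows from the measure-theoretic argument sketched above.
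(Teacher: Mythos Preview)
The paper does not prove this theorem at all: it simply cites Robinson's monograph \cite{R11}, so there is no ``paper's own proof'' to compare against. Your sketch is therefore an attempt to reproduce (or re-derive) Robinson's argument, and it is broadly on the right track --- prevalence via a polynomial probe space and Sauer--Yorke--Casdagli parameter counting is indeed the mechanism --- but there is one genuine gap and one mismatch with the stated hypotheses.

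The gap is that the space of \emph{all} polynomials of degree $\le 2k$ on an infinite-dimensional Hilbert space $H$ is \emph{not} finite-dimensional, so it cannot serve as a probe space in the Hunt--Sauer--Yorke sense as you claim. Robinson's route around this is precisely the step you omit: first use a H\"older Man\'e projection $L:H\to\R^N$ (with $N$ large enough) that is one-to-one on $\Cal A$ with H\"older inverse, then work with the conjugated dynamics $\tilde S(\tau):=L\circ S(\tau)\circ L^{-1}$ on the finite-dimensional image $L(\Cal A)\subset\R^N$, where the polynomials of degree $\le 2k$ \emph{do} form a finite-dimensional probe. The observable on $H$ is then $F=P\circ L$, which is a polynomial in the finitely many linear coordinates of $L$; this is also what makes the final ``polynomial of degree $2k$'' assertion honest.

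The mismatch concerns the bound on $k$. Your counting argument --- codimension of each $\NN(u,v)$ versus $2d$ --- would, if it went through directly in $H$ with Lipschitz maps, only require $k>2d$. The non-standard hypothesis $k\ge(2+d)d+1$ in the statement is \emph{not} a combinatorial artefact of the strata: it comes from the H\"older (not Lipschitz) regularity of $L^{-1}$ in the Man\'e projection step, which makes $\tilde S(\tau)$ only H\"older on $L(\Cal A)$ and thereby inflates the box-counting dimension of the bad set by the reciprocal of the H\"older exponent. The paper's remark immediately after the theorem says exactly this. So your sentence ``the hypothesis $k\ge(2+d)d+1$ is exactly what makes this hold on every stratum simultaneously'' is not right as written: you need to insert the Man\'e reduction first, and then the H\"older loss is what forces the larger $k$.
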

The proof of this theorem is given in \cite{R11}.
\begin{remark} A   { bit non-standard} condition on $k$ is related   {  to}
the fact that the H\"older Man\'e projection theorem is essentially used in the proof and the maximal H\"older exponent there is
restricted by the so-called dual thickness exponent of the attractor, see \cite{R11}. In many cases related   {  to}
semilinear parabolic equations, this thickness exponent is known to be zero and then we return to
 the standard condition $k\ge 2\dim_B(\Cal A)+1$ in the Takens theorem.
\end{remark}
\begin{corollary}\label{Cor3.det} Under the assumptions of Theorem \ref{Th3.Tkn},  functional $F$ constructed in this theorem is asymptotically determining and, therefore,
\begin{equation}
\dim_{det}(S(t),H)=1.
\end{equation}
\end{corollary}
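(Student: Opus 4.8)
The plan is to derive the corollary from the notion of a system separating on the attractor: I would show that the one-element system $\Cal F=\{F\}$ is separating on $\Cal A$ in the sense of Definition \ref{Def1.atr-det}, and then appeal to Proposition \ref{Prop1.det} to upgrade this to asymptotic determination. The crucial (and essentially only) observation is that, along a single orbit, knowing the scalar $F$ together with the built-in time shifts $S(\tau),\dots,S((k-1)\tau)$ is exactly the same as knowing the delay-embedding map $F(k,\cdot)$ of \eqref{3.tak}.

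First I would pick two complete trajectories $u(t)$, $v(t)$, $t\in\R$, lying on $\Cal A$ and satisfying $F(u(t))=F(v(t))$ for all $t\in\R$. Fix $t\in\R$. By strict invariance of $\Cal A$ the points $S(j\tau)u(t)=u(t+j\tau)$ and $S(j\tau)v(t)=v(t+j\tau)$, $j=0,\dots,k-1$, belong to $\Cal A$, and by the very definition of $F(k,\cdot)$,
\[
F(k,u(t))=\big(F(u(t)),F(u(t+\tau)),\dots,F(u(t+(k-1)\tau))\big),
\]
and similarly for $v(t)$. Since $F(u(t+j\tau))=F(v(t+j\tau))$ for every $j$ — this is just the hypothesis evaluated at shifted times — we get $F(k,u(t))=F(k,v(t))$. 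As $F(k,\cdot)$ is one-to-one on $\Cal A$ by Theorem \ref{Th3.Tkn} and $u(t),v(t)\in\Cal A$, it follows that $u(t)=v(t)$; since $t$ was arbitrary, $u\equiv v$. Hence $\Cal F=\{F\}$ is separating on the attractor.

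Next I would check that the hypotheses of Proposition \ref{Prop1.det} are satisfied by the dynamical system $S(t)$ generated by \eqref{2.abs}: dissipativity and the existence of a global attractor are recalled in Section \ref{s2}, while continuity of trajectories in $C_{loc}(\R_+,H)$ and of the solution map $u_0\mapsto S(\cdot)u_0$ from $H$ into $C_{loc}(\R_+,H)$ are part of the standard well-posedness theory for \eqref{2.abs}. Proposition \ref{Prop1.det} then turns the separating property into asymptotic determination, so $\dim_{det}(S(t),H)\le 1$. Together with the trivial lower bound $\dim_{det}(S(t),H)\ge 1$ (which follows from \eqref{1.lower}, since $\Cal R$ is finite and non-empty, or simply because no empty system of functionals can determine a non-trivial dynamics), this gives $\dim_{det}(S(t),H)=1$.

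I do not expect a genuine obstacle here. All the hard analysis has been packed into Theorem \ref{Th3.Tkn}, the H\"older Man\'e / Takens delay embedding theorem, and the corollary is the purely formal step of matching the delay coordinates $(F(u),F(S(\tau)u),\dots,F(S((k-1)\tau)u))$ with the time-shifted values of $F$ along one trajectory. The two points that need a modicum of care are working consistently with complete trajectories on $\Cal A$ (so that the shifts $S(j\tau)$ are defined and keep the points on the attractor) and recalling that the choice $k\tau\ge T_0$ required to apply Theorem \ref{Th3.Tkn} is already built into its hypotheses.
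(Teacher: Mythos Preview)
Your proposal is correct and follows essentially the same route as the paper: show that $\{F\}$ is separating on the attractor by invoking the injectivity of the delay map $F(k,\cdot)$ on $\Cal A$, then apply Proposition~\ref{Prop1.det} to upgrade to asymptotic determination. The paper's proof is a terse two-line version of exactly this argument; your additional remarks (the identification $S(j\tau)u(t)=u(t+j\tau)$, the verification of the hypotheses of Proposition~\ref{Prop1.det}, and the lower bound giving equality) simply spell out details that the paper leaves implicit.
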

\begin{proof} Indeed, let $u_1(t)$ and $u_2(t)$ be two complete trajectories belonging to the attractor and let $F(u_1(t))\equiv F(u_2(t))$ for all $t\in\R$. Then, since the map \eqref{3.tak} is one-to-one on the attractor, we conclude that $u_1(t)\equiv u_2(t)$ and $F$ is separating on the attractor. This finishes the proof of the corollary due to Proposition \ref{Prop1.det}.
\end{proof}
Let us take a functional $F$ constructed in Theorem \ref{Th3.Tkn} and let $\bar{\Cal A}:=F(k,\Cal A)\subset\R^{k}$. Then, since $\Cal A$ is compact, $F(k,\cdot)$ is a homeomorphism between $\Cal A$ and $\bar{\Cal A}$. Let us introduce one more map
\begin{equation}\label{3.theta}
\Theta: \bar{\Cal A}\to H,\ \  \Theta(u):=S(k\tau)F(k,u)^{-1}.
\end{equation}
By the construction, this map is continuous and, for every trajectory $u(t)$, $t\in\R$, belonging to the attractor, we have the identity
\begin{equation}\label{3.delay1}
u(t)=\Theta(F(u(t-k\tau),\cdots,F(u(t-\tau))),\ \ t\in\R.
\end{equation}
In particular, denoting $Z(t):=F(u(t))\in\R$, we get
\begin{equation}\label{2.Z}
Z(t)=F(\Theta(Z(t-k\tau),\cdots, Z(t-\tau)))
\end{equation}
which gives the reduction of the dissipative dynamics generated by \eqref{2.abs} on the attractor to a {\it scalar}  equation with delay.
 However, equation \eqref{2.Z} looks not friendly (e.g., does not possess any smoothing/compactness  property on a finite time interval), so it
 {  seems} better to replace it by delay differential equation in the spirit of \eqref{2.dode}.
To this end, we assume in addition that $\alpha=0$ and  our functional $F$ is smooth and use the relation \ref{3.delay1}
in order to find the derivative $\Dt u$ through equation \eqref{2.abs}.
Using in addition that the operator $A$ is continuous on the attractor as the map from $H$ to $H$ (see e.g. \cite{R11}), we end up with
\begin{multline}
\Dt u(t)=-A\Theta(F(u(t-k\tau),\cdots,F(u(t-\tau)))+\\+f( \Theta(F(u(t-k\tau),\cdots,F(u(t-\tau))))+g
\end{multline}
and therefore
\begin{multline}\label{3.Z-ode}
\frac d{dt}Z(t)=\bar \Theta(Z(t-k\tau),\cdots,Z(t-\tau))), \ \text{where}\\
\bar\Theta(\xi_1,\cdots,\xi_{k}):=\\=F'(\Theta(\xi_1,\cdots,\xi_{k}))[-A\Theta(\xi_1,\cdots,\xi_{k})
+f(\Theta(\xi_1,\cdots,\xi_{k}))+g]
\end{multline}
Thus, we have proved the following result which is somehow analogous to Lemma \ref{Lem2.Lyap}.
\begin{corollary} Let the operator $A$ and the non-linearity $f$ satisfy the assumptions of Section \ref{s2} with $\alpha=0$ and let the set of equilibria $\Cal R$ be finite. Then, there exists a continuous function $\bar \Theta:\R^k\to\R$ (where $k$ is the same as in Theorem \ref{Th3.Tkn}) such that the dynamics of \eqref{2.abs} on the attractor is homeomorphically embedded to the dynamics of scalar DDE \eqref{3.Z-ode}.
\end{corollary}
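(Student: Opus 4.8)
The plan is to turn the identity \eqref{3.delay1} into a closed scalar delay equation by differentiating in time, exactly as sketched before the statement, and then to package the correspondence $u(\cdot)\mapsto F(u(\cdot))$ as an equivariant homeomorphism onto a subset of the solution set of \eqref{3.Z-ode}. Fix the functional $F$ supplied by Theorem \ref{Th3.Tkn}; being a polynomial of degree $2k$ it is smooth on $H$, its derivative $F':H\to H^*$ is a polynomial (hence continuous) map, and the delay map $F(k,\cdot)$ of \eqref{3.tak} is, by compactness of $\Cal A$, a homeomorphism of $\Cal A$ onto $\bar{\Cal A}:=F(k,\Cal A)\subset\R^k$. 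Consequently $\Theta:=S(k\tau)\circ F(k,\cdot)^{-1}:\bar{\Cal A}\to\Cal A$ of \eqref{3.theta} is continuous, takes values in $\Cal A\subset D(A)$, and satisfies \eqref{3.delay1} along every complete trajectory on the attractor. The first step is then to check that the formula for $\bar\Theta$ displayed in \eqref{3.Z-ode} defines a continuous function on $\bar{\Cal A}$: this uses that $A$ restricted to $\Cal A$ is continuous into $H$ (the attractor being bounded in $D(A)$, cf. \cite{R11}), that $f$ is continuous from $H$ to $H$ when $\alpha=0$, and that $F'$ is continuous; one then extends $\bar\Theta$ to a continuous function on all of $\R^k$ by the Tietze theorem, the choice of extension being immaterial since the delayed arguments always lie in $\bar{\Cal A}$.

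Next one verifies that $Z:=F\circ u$ solves \eqref{3.Z-ode} whenever $u(\cdot):\R\to\Cal A$ is a complete trajectory. On the attractor such solutions are classical: $u(t)\in D(A)$ for all $t$ and $\Dt u(t)=-Au(t)+f(u(t))+g$ is continuous from $\R$ into $H$ by the usual parabolic smoothing. Since $F$ is a polynomial, $Z$ is $C^1$ and the chain rule gives $\dot Z(t)=\langle F'(u(t)),\Dt u(t)\rangle=\langle F'(u(t)),-Au(t)+f(u(t))+g\rangle$. Substituting $u(t)=\Theta(Z(t-k\tau),\dots,Z(t-\tau))$ from \eqref{3.delay1} into each occurrence of $u(t)$ on the right-hand side produces exactly $\dot Z(t)=\bar\Theta(Z(t-k\tau),\dots,Z(t-\tau))$; note that the $-A\Theta(\cdots)$ term makes sense precisely because $\Theta$ has values in $\Cal A\subset D(A)$.

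Finally one assembles the embedding. Let $\K\subset C_{loc}(\R,H)$ denote the set of complete bounded trajectories of \eqref{2.abs}, carried by the time shift $T(h)$, which is conjugate to $S(t)$ (and, using backward uniqueness for \eqref{2.abs}, $\K$ is homeomorphic to $\Cal A$ via evaluation at $t=0$; in particular $\K$ is compact). Define $J:\K\to C_{loc}(\R,\R)$ by $J(u(\cdot)):=F\circ u(\cdot)$. By the previous step $J$ maps $\K$ into the set of complete solutions of \eqref{3.Z-ode}; it is continuous for the $C_{loc}$ topologies and it intertwines the shifts, $J\circ T(h)=T(h)\circ J$. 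Injectivity is where the Takens embedding enters: if $F(u_1(s))=F(u_2(s))$ for all $s$, then reading off the values at $s=t,t+\tau,\dots,t+(k-1)\tau$ gives $F(k,u_1(t))=F(k,u_2(t))$, hence $u_1(t)=u_2(t)$ for every $t$. Since $\K$ is compact, $J$ is a homeomorphism onto its image, so $(\Cal A,S(t))$ is equivariantly and homeomorphically embedded into the shift dynamics on the solution set of \eqref{3.Z-ode} — or, projecting a complete solution onto its trace on $[-k\tau,0]$, into the natural phase space $C([-k\tau,0],\R)$ of that delay equation.

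The main obstacle, and the reason the statement is phrased as an embedding rather than a topological conjugacy, is the low regularity of $\bar\Theta$: since $\Theta$ is only as smooth as the inverse $F(k,\cdot)^{-1}$ of the Takens map — H\"older at best, and in general nothing more — the right-hand side of \eqref{3.Z-ode} is merely continuous, so the Cauchy problem for this DDE need not be well posed and \eqref{3.Z-ode} does not by itself generate a single-valued dynamical system. Thus ``the dynamics of the DDE'' must be read as the shift action on the set of its solutions (equivalently, the possibly multivalued semiflow it generates on $C([-k\tau,0],\R)$), and the content is that this set contains an equivariant homeomorphic copy of the attractor dynamics of \eqref{2.abs}. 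A secondary but genuinely needed point is the justification of the chain-rule computation — that trajectories on $\Cal A$ are $C^1$ into $H$ with values in $D(A)$ and that $A$ is continuous on $\Cal A$ — which is standard for \eqref{2.abs} and was quoted above.
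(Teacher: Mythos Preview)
Your proof is correct and follows essentially the same route as the paper: derive \eqref{3.Z-ode} from \eqref{3.delay1} via the chain rule (using that trajectories on $\Cal A$ lie in $D(A)$ and that $A$ is continuous on the attractor), extend $\bar\Theta$ from $\bar{\Cal A}$ to $\R^k$ by Tietze, and realize the embedding as $u_0\mapsto F(S(\cdot)u_0)$ into the phase space $C[-k\tau,0]$ of the DDE, with injectivity coming from the Takens property of $F(k,\cdot)$. The paper's argument is a two-line sketch; you supply considerably more detail --- in particular the explicit verification that $Z=F\circ u$ is $C^1$ and satisfies \eqref{3.Z-ode}, and the careful remark that, since $\bar\Theta$ is only continuous, the DDE may not generate a single-valued semiflow and the ``dynamics'' must be read as the shift on its solution set --- but the underlying ideas are identical.
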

Indeed, the natural phase space of problem \eqref{3.Z-ode} is $C[-k\tau,0]$ and,  by the construction,
the map $\Theta_Z:u_0\to F(S(\cdot+k\tau)u_0)$ gives a homeomorphic embedding of the attractor $\Cal A$ to $C[-k\tau,0]$. Moreover, the dynamics of \eqref{2.abs} on $\Cal A$ is conjugated to the dynamics of \eqref{3.Z-ode} (also by the construction). It only remains to note that the function $\bar\Theta$ is initially defined on $\bar A$ only, but it can be extended to the whole $\R^k$, e.g. by Tietze theorem.
\par
We now discuss the analogue of Proposition \ref{Prop2.assym}. To this end, we fix $N\in\Bbb N$ the same as in Proposition \ref{Prop2.assym} and introduce the finite-dimensional map
\begin{equation}
\Theta_N(\xi_1,\cdots,\xi_k):=P_N\Theta(\xi_1,\cdots,\xi_k),\   \Theta_N : \bar{\Cal A}\to\R^N
\end{equation}
which can be extended by Tietze theorem to a continuous map from $\R^k$ to $\R^N$. Then the following result holds.
\begin{proposition}\label{Prop3.assym} Let the assumptions of Corollary \ref{Cor3.det} holds. Let also the trajectory $u(t)\in\Cal A$ be given and $F:H\to\R$ be an asymptotically determining functional. Consider the problem
\begin{multline}\label{3.as}
\Dt v(t)+Av(t)-f(v(t))+\\+K(P_Nv(t)-\Theta_N(F(u(t-k\tau)),\cdots,F(u(t-\tau))))=0,
\end{multline}
where $K$ is large enough. Then, estimate \eqref{2.conv} holds.
\end{proposition}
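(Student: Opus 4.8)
The plan is to observe that, along the prescribed trajectory $u(t)\in\Cal A$, the nonlocal driving term $\Theta_N\big(F(u(t-k\tau)),\cdots,F(u(t-\tau))\big)$ appearing in \eqref{3.as} is nothing but $P_Nu(t)$, so that \eqref{3.as} collapses onto the assimilation equation \eqref{2.as} and the bound \eqref{2.conv} follows verbatim from Proposition \ref{Prop2.assym}. (Here $F$ is understood to be a functional of the type produced by Theorem \ref{Th3.Tkn}, so that the map $\Theta$ of \eqref{3.theta} and its truncation $\Theta_N$ are well defined.)

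First I would unwind the definitions to establish the identity $\Theta_N\big(F(u(t-k\tau)),\cdots,F(u(t-\tau))\big)=P_Nu(t)$. Since $u(\cdot)$ is a complete trajectory on the attractor, $u(t-k\tau)\in\Cal A$ for every $t\in\R$, and the definition \eqref{3.tak} of the delay map gives
\begin{multline*}
F(k,u(t-k\tau))=\Big(F(u(t-k\tau)),\,F(S(\tau)u(t-k\tau)),\,\cdots,\,F(S((k-1)\tau)u(t-k\tau))\Big)\\
=\Big(F(u(t-k\tau)),\,F(u(t-(k-1)\tau)),\,\cdots,\,F(u(t-\tau))\Big).
\end{multline*}
Hence the argument of $\Theta_N$ is the point $F(k,u(t-k\tau))\in\bar{\Cal A}=F(k,\Cal A)$, and on $\bar{\Cal A}$ the Tietze extension $\Theta_N$ agrees with $P_N\Theta$. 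By the definition \eqref{3.theta} of $\Theta$, using that $F(k,\cdot):\Cal A\to\bar{\Cal A}$ is a homeomorphism (Theorem \ref{Th3.Tkn}), we get $\Theta\big(F(k,u(t-k\tau))\big)=S(k\tau)u(t-k\tau)=u(t)$, which is exactly relation \eqref{3.delay1}; projecting onto $P_NH$ yields the claimed identity.

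Once this is in hand, substituting it into \eqref{3.as} turns the feedback term into $K\big(P_Nv(t)-P_Nu(t)\big)$, so that \eqref{3.as} becomes precisely equation \eqref{2.as} for the pair $(v,u)$, with the same requirement "$K$ large enough" (recall that in Proposition \ref{Prop2.assym} it suffices to take $K>L\lambda_N^\alpha$). Estimate \eqref{2.conv} is then immediate from Proposition \ref{Prop2.assym}: writing $w=v-u$, the difference solves a homogeneous equation and the spectral-gap computation there gives $\frac12\frac{d}{dt}\|w(t)\|_H^2+\beta\|w(t)\|_H^2\le0$.

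I do not expect a real obstacle here, since the statement reduces entirely to the already-proven Proposition \ref{Prop2.assym}; the care needed is purely of a bookkeeping nature. One must make sure the delay vector $\big(F(u(t-k\tau)),\cdots,F(u(t-\tau))\big)$ genuinely lies in $\bar{\Cal A}$ — this is where it is used that $u(\cdot)$ is a complete trajectory on $\Cal A$, so that all the delayed states $u(t-j\tau)$, $j=1,\dots,k$, are defined and remain on the attractor — and one should note that the (arbitrary) Tietze extension of $\Theta_N$ off $\bar{\Cal A}$ is irrelevant, because along the trajectory $\Theta_N$ is evaluated only at points of $\bar{\Cal A}$. Finally, it is worth emphasizing that, in contrast to Proposition \ref{Prop2.assym} where the data $P_Nu(t)$ is read directly from the linear observations $F_n(u(t))$, here the reconstruction map $\Theta_N$ is only available abstractly, so converting \eqref{3.as} into an effective data-assimilation scheme would require explicit control of $\Theta_N$.
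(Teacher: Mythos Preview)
Your proposal is correct and follows exactly the same route as the paper: you identify $\Theta_N\big(F(u(t-k\tau)),\cdots,F(u(t-\tau))\big)=P_Nu(t)$ via \eqref{3.delay1} and then invoke Proposition~\ref{Prop2.assym} directly. The paper's proof is just the one-line version of what you wrote, so your additional bookkeeping (verifying the delay vector lies in $\bar{\Cal A}$, noting the Tietze extension is irrelevant) is welcome elaboration rather than a different argument.
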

Indeed, by the construction,
$$
P_Nu(t)=\Theta_N(F(u(t-k\tau),\cdots,F(u(t-\tau))))
$$
and the assertion follows immediately from Proposition \ref{Prop2.assym}.
\begin{remark}As we have already mentioned, the assumption $\alpha=0$ is introduced for simplicity only. All of the results stated below remain true (with very minor changes) in a general case $\alpha\in[0,1)$ as well.
\end{remark}
To conclude the section, we briefly discuss the case where the extra generic assumption about the finiteness of $\Cal R$ is not posed. This may be useful e.g. for the case when problem \eqref{2.abs} possesses some physically relevant symmetries which are forbidden to break. The analogue of Corollary \ref{Cor3.det} now reads.
\begin{corollary}\label{Cor3.det-R} Let the operator $A$ and the nonlinearity $f$ satisfy the assumptions of Section \ref{s2}. Then the determining dimension of the DS associated with equation \eqref{2.abs} possesses the following estimate:
\begin{equation}\label{3.goodest}
\dim_{emb}(\Cal R)\le\dim_{det}(S(t),H)\le\dim_{emb}(S(t),H)+1,
\end{equation}
where $\Cal R$ is the corresponding set of equilibria.
\end{corollary}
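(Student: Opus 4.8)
The plan is the following. The lower bound $\dim_{emb}(\Cal R)\le\dim_{det}(S(t),H)$ needs nothing new: it is exactly \eqref{1.lower}, since any asymptotically determining system $\Cal F=\{F_1,\dots,F_N\}$ produces a continuous map $F=(F_1,\dots,F_N):H\to\R^N$ which must be injective on $\Cal R$, because two distinct equilibria are distinct constant (hence complete and bounded) trajectories that do not converge to one another. So the whole of the work lies in the upper bound $\dim_{det}(S(t),H)\le\dim_{emb}(\Cal R)+1$.

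For this, I would first set $m:=\dim_{emb}(\Cal R)$. Since every equilibrium belongs to the global attractor $\Cal A$ and $\Cal R$ is closed (as $A$ is closed and $f$ is continuous), $\Cal R$ is compact, so $m<\infty$, and any continuous injection $\Phi=(\phi_1,\dots,\phi_m):\Cal R\to\R^m$ is automatically a homeomorphism onto its image. Fix such a $\Phi$ and extend its components to continuous functionals $\Phi_1,\dots,\Phi_m:H\to\R$ (first to $\Cal A$, then to $H$, by the Tietze theorem). Next choose $k\ge(2+\dim_B(\Cal A))\dim_B(\Cal A)+1$ and $\tau>0$ with $k\tau\ge T_0$, where $T_0$ is as in Proposition \ref{Prop3.per}. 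The key step is then the following variant of Theorem \ref{Th3.Tkn}: for a prevalent set of maps $F:H\to\R$ — which may moreover be taken to be polynomials of degree $2k$ — the map
\[
\mathcal G(u):=\big(\Phi_1(u),\dots,\Phi_m(u),\,F(u),\,F(S(\tau)u),\dots,F(S((k-1)\tau)u)\big)
\]
is one-to-one on $\Cal A$. Granting this, fix such an $F$ and consider the $m+1$ continuous functionals $\{\Phi_1,\dots,\Phi_m,F\}$. If complete trajectories $u(t),v(t)\subset\Cal A$ satisfy $\Phi_i(u(t))=\Phi_i(v(t))$ and $F(u(t))=F(v(t))$ for all $t\in\R$, then evaluating at $t,t+\tau,\dots,t+(k-1)\tau$ gives $\mathcal G(u(t))=\mathcal G(v(t))$ for every $t$, hence $u\equiv v$; thus the system is separating on $\Cal A$, and by Proposition \ref{Prop1.det} it is asymptotically determining. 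This yields $\dim_{det}(S(t),H)\le m+1=\dim_{emb}(\Cal R)+1$.

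The main obstacle is proving the embedding statement for $\mathcal G$, and here the plan is to revisit the proof of Theorem \ref{Th3.Tkn} from \cite{R11}. That argument controls injectivity of a delay map in essentially three regimes, and I would go through each. For pairs of points bounded away from the diagonal, a prevalent set of $F$ works by a Sard/transversality count governed by $\dim_B(\Cal A)<\infty$ (this, through the H\"older Man\'e theorem, is precisely the source of the condition on $k$), and adjoining the fixed coordinates $\Phi_i$ can only help injectivity, so this step is untouched. The diagonal regime forces one to separate the equilibria, and this is the one genuinely new point: on $\Cal R$ the map $\mathcal G$ restricts to $u\mapsto(\Phi_1(u),\dots,\Phi_m(u),F(u),\dots,F(u))$, whose injectivity on $\Cal R$ is now built in by the choice of $\Phi$ rather than obtained from genericity of $F$ — so, unlike in Theorem \ref{Th3.Tkn}, one only needs $\dim_{emb}(\Cal R)<\infty$ rather than $\Cal R$ finite. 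Near periodic orbits one argues exactly as in \cite{R11}, using Proposition \ref{Prop3.per} (all nontrivial periods satisfy $T\ge T_0$) together with $k\tau\ge T_0$ to exclude the resonant configurations at which the delay coordinates degenerate; and the possibility that an equilibrium and a non-equilibrium point share a $\mathcal G$-value is likewise ruled out for prevalent $F$ within this part of the argument, since a non-equilibrium point cannot have all of its delay coordinates $F(S(j\tau)u)$ coincide for a prevalent $F$. The delicate bookkeeping will be to check that fixing $m$ of the coordinates to equal the pre-chosen $\Phi_i$ does not destroy prevalence of the admissible $F$, and that the periodic-orbit analysis survives the presence of an infinite equilibria set; all the remaining ingredients (dissipativity, the $C_{loc}$-continuity of $u_0\mapsto S(\cdot)u_0$, finiteness of $\dim_B(\Cal A)$) are already available. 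Finally, intersecting the resulting prevalent set of $F$ with the finite-dimensional space of polynomials of degree $2k$, as in \cite{R11}, yields the polynomial refinement.
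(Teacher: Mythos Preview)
Your proposal is correct and follows essentially the same route as the paper's own (sketch of) proof: both obtain the lower bound directly from \eqref{1.lower}, and for the upper bound both revisit the proof of the Takens-type theorem in \cite{R11}, handling pairs $x,y\in\Cal A$ with at least one non-equilibrium point exactly as in \cite[Theorem 14.5]{R11} while replacing the finiteness assumption on $\Cal R$ by adjoining the components $\phi_1,\dots,\phi_{\dim_{emb}(\Cal R)}$ of a fixed continuous embedding $\Phi:\Cal R\to\R^{\dim_{emb}(\Cal R)}$ (extended to $H$ by Tietze) to the single Takens functional $F$. The paper phrases the case split slightly differently --- noting that the three cases of \cite{R11} collapse to two because small-period periodic orbits are automatically equilibria --- but this is only a presentational difference from your three-regime description.
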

\begin{proof}[Sketch of the proof]  The proof of this statement consists of a revision of the proof given in \cite[Theorem 14.7]{R11} adapting it to our case. First, the reduction to finite-dimensional case works exactly as in \cite{R11}, so we only need to revise the proof of the finite-dimensional version, see \cite[Theorem 14.5]{R11}. Since in our case all periodic orbits of small period are equilibria, three cases considered in the proof of this theorem are reduced to two cases.
\par
{\it Case 1:} at least one of two points $x,y\in\Cal A$ does not belong to $\Cal R$. In this case, a single functional $F$ which distinguishes such points can be constructed exactly in the same way as in the proof of   \cite[Theorem 14.5]{R11} (estimating the rank of the corresponding matrix and using \cite[Lemma 14.4]{R11}).
\par
{\it Case 2:} $x,y\in\Cal R$. To distinguish such points we use the fact that there exists a homeomorphic embedding
$\phi:\Cal R\to\R^{\dim_{emb}(\Cal R)}$, so we just add the coordinates of $\phi$ to our system of functionals and get a system $\{F,\phi_1,\cdots,\phi_{\dim_{emb}(\Cal R)}\}$ (we implicitly assume that $\phi$ is already extended in a continuous way to the whole $H$). This gives the desired upper bound for the determining dimension. The lower bound follows from \eqref{1.lower} and the corollary is proved.
\end{proof}
\begin{remark} Recall that the embedding dimension of $\Cal R$ possesses the following estimates:
\begin{equation}\label{3.cont}
\dim_{emb}(\R)\le 2\dim_{top}(\R)+1\le2\dim_B(\R)+1\le2\dim_B(\Cal A)+1<\infty,
\end{equation}
where $\dim_{top}$ is a topological (Lebesgue covering) dimension, see e.g. \cite{R11}. Mention also that, if we control the embedding dimension by the topological one, we may also claim that the determining functionals are dense and if we control it by box-counting dimension, we may get the system of determining functionals all of them except of one are linear and this one is polynomial. In order to avoid the technicalities, we leave the rigorous proof of this fact to the reader.
\end{remark}

\section{Examples}\label{s4}
In this section, we illustrate the theory by a number of  examples where the determining functionals can be explicitly found. Some of them are known, but the rest  {  are new to the best of our knowledge}. We start with the example which shows the difference between asymptotically determining and separating on the attractor functionals.

\begin{example}\label{Ex4.sep} Let $H=\R^2$ and the dynamical system $S(t)$ is determined by the phase portrait below

\vspace{2mm}

{
\hspace{35mm}
\begin{tikzpicture}[scale=0.35]
\fill[outer color=red!5] (0,1) circle (2);
\draw[thick, ->] (0,-1) arc (270:360:0.5);
\draw[thick, ->] (0.5,-0.5) arc (0:180:0.5);
\draw[thick, ->] (-0.5,-0.5) arc (180:270:0.5);
\draw[thick, ->] (1,0) arc (0:120:1);
\draw[thick, ->] (-0.5,0.8660254) arc (120:360:1);
\draw[thick, ->] (0,-1) arc (270:360:1.5);
\draw[thick, ->] (1.5,0.5) arc (0:180:1.5);
\draw[thick, ->] (-1.5,0.5) arc (180:270:1.5);
\draw[color=red!70, thick, ->] (0,-1) arc (270:360:2);
\draw[color=red!70, thick, ->] (2,1) arc (0:90:2);
\draw[color=red!70, thick, ->] (0,3) arc (90:180:2);
\draw[color=red!70, thick, ->] (-2,1) arc (180:270:2);
\draw[thick, ->](0,3.8) arc (90:230:2.6);
\draw[thick, ->](-1.6712478,-0.791715544) to [out=330, in=195] (0,-1);
\draw[thick, ->](2.7,1.1) arc (0:90:2.7);
\draw[thick](2.7,1.1) arc (360:300:2.7);
\draw[thick, ->] (0.5,-3) to [out=85, in=195] (1.35,-1.23826858);
\draw[thick, ->] (0.5,-4)--(0.5,-3);
\draw[thick, ->] (0,-4)--(0,-2);
\draw[thick, ->] (0,-2)--(0,-1);
\draw[thick, ->](0,5.8) arc (90:180:4.7);
\draw[thick, ->](-4.7,1.1) arc (180:230:4.7);
\draw[thick, ->](-3.021101767,-2.500408868) to [out=330, in=260] (0,-1);
\draw[thick](0,5.8) arc (90:30:4.9);
\draw[thick, ->](4.24352446,-1.55) arc (330:390:4.9);
\draw[thick, ->] (2.5,-4) to [out=85, in=230] (4.24352446,-1.55);
\draw[thick, ->] (0.5,-4)--(0.5,-3);
\draw[thick, ->](-5,-4) to [out=350, in=265] (-0.05,-1.4);
\draw[thick,](0,-1) -- (-0.05,-1.4);
\end{tikzpicture}
}

    {\centerline{Figure 1.}}
\vspace{3mm}
\par
Actually this phase portrait consists of a saddle-node at the origin $(0,0)$ glued with the disc $\{(x,y)\in\R^2\,:\  x^2+(y-1)^2\le 1\}$ filled by homoclinic orbits to the origin. The key feature of this DS is that the $\omega$-limit set of any single trajectory coincides with the origin (and, therefore, any continuous functional is automatically asymptotically determining), but the global attractor
$$
\Cal A=\{(x,y)\in\R^2,\ \ x^2+(y-1)^2\le1\}
$$
is not trivial. Obviously, the functional $F(x,y)=x^2+(y-1)^2-1$ is not separating on the attractor since it does not distinguish the origin and the homoclinic orbit at the boundary of the attractor.
\end{example}
Next example shows that the class of linear functionals may be insufficient for finding good determining functionals.
\begin{example}\label{Ex4.linbad} Let $H=\R^4$ and let the dynamical system $S(t)$ be determined by the following system of linear ODEs:
\begin{equation}\label{4.linear}
\dot x=y,\ \ \dot y=-x,\ \ \dot z=u,\ \ \dot u=-z.
\end{equation}
Of course, this DS is not dissipative, but we may correct the right-hand sides outside of a large ball making it dissipative. A general solution of this system has the form:
\begin{equation*}
x=A\sin(t+\phi_1),\ y=A\cos(t+\phi_1),\ \ z=B\sin(t+\phi_2),\ \ u=B\cos(t+\phi_2),
\end{equation*}
where the amplitudes $A,B$ and phases $\phi_1$ and $\phi_2$ are arbitrary. We see that this system has only one equilibrium (this property can be easily preserved under the correction of the right-hand sides for making the system dissipative), therefore, we know from a general theory that the determining dimension of this system is one. In {  a} fact, it is not difficult to see that the desired functional can be found in the class of quadratic functionals even for the initial system \eqref{4.linear}. We claim that such a functional cannot be linear. Indeed, let
$$
F=\alpha x+\beta y+\gamma z+\delta u
$$
be such a functional where not all of $\alpha,\beta,\gamma,\delta$ are equal to zero. We claim that there are non-zero $A,B,\phi_1,\phi_2$ such that
\begin{equation}\label{4.trig}
\alpha\cos(t-\phi_1)+\beta\sin(t-\phi_1)+\gamma\cos(t-\phi_2)+\delta\sin(t-\phi_2)\equiv0.
\end{equation}
Indeed, after the elementary transformations, \eqref{4.trig} is equivalent to
$$
AA_{\alpha,\beta}\cos(t-\phi_1-\phi_{\alpha,\beta})+BA_{\gamma,\delta}\cos(t-\phi_2-\phi_{\gamma,\delta})\equiv0
$$
for the appropriate amplitudes $A_{\alpha,\beta}$ and $A_{\gamma,\delta}$ and phases $\phi_{\alpha,\beta}$ and $\phi_{\gamma,\delta}$ depending only on $\alpha,\beta,\gamma,\delta$. In turn, this identity will be satisfied if
$$
\sin(\phi_1-\phi_2+\phi_{\alpha,\beta}-\phi_{\gamma,\delta})=0
$$
 and
$$
AA_{\alpha,\beta}+BA_{\gamma,\delta}\cos(\sin(\phi_1-\phi_2+\phi_{\alpha,\beta}-\phi_{\gamma,\delta})=0.
$$
The first equation can be satisfied by the choice of phases $\phi_1$ and $\phi_2$. Then the second one becomes a linear equation on the amplitudes $A$ and $B$ which always has a non-trivial solution. Thus, $F$ cannot be a determining functional.
\end{example}
The next two examples are related to the case of one spatial dimension.
\begin{example}\label{Ex4.dir} Let us consider the following 1D semilinear heat equation
\begin{equation}\label{4.heat}
\Dt u=\nu\partial_x^2u-f(u)+g,\ \ x\in[0,\pi], \ \ \nu>0
\end{equation}
endowed with Dirichlet boundary conditions
$$
u\big|_{x=0}=u\big|_{x=L}=0.
$$
Assume also that $f\in C^1(\R,\R)$ satisfies some dissipativity conditions, say $f(u)u\ge-C$. Then, equation \eqref{4.heat} generates a dissipative DS in the phase space $H=L^2(0,L)$ and this DS possesses a global attractor $\Cal A$ which is bounded at least in $C^2([0,L])$, see e.g. \cite{BV92},
\par
Moreover, the equilibria $\Cal R$ of this problem satisfies the second order ODE
$$
\nu u''(x)-f(u(x))+g=0,\ \ u(0)=u(L)=0.
$$
Thus, the map $u\to u'(0)$ gives a homeomorphic (and even smooth) embedding of $\Cal R$ to $\R$, so $\dim_{emb}(\Cal R)=1$. Thus, we expect that $\dim_{det}(S(t),H)=1$ (or at most $2$ according to Corollary \ref{Cor3.det-R}). The possible explicit form of the determining functional is well-known here: $F(u):= u\big|_{x=x_0}$, where $x_0>0$ is small enough, see \cite{kukavica}. Indeed, let $u_1(t),u_2(t)\in\Cal A$ be two complete trajectories of \eqref{4.heat} belonging to the attractor such that $u_1(t,x_0)\equiv u_2(t,x_0)$. Then the function $v(t)=u_1(t)-u_2(t)$ solves
\begin{equation}\label{4.dif}
\Dt v=\nu\partial_x^2v-l(t)v,\  v\big|_{x=0}=v\big|_{x=x_0}=0,
\end{equation}
where $l(t):=\int_0^1f'(su_1(t)+(1-s)u_2(t))\,ds$. Since the attractor $\Cal A$ is bounded in $C$, we know that $|l(t)|_{C[0,L]}\le L$ is also globally bounded. Multiplying equation \eqref{4.dif} by $v$, integrating in $x\in[0,x_0]$ and using the fact that the first eigenvalue for $-\partial_x^2$ with Dirichlet boundary conditions is $(\frac\pi{x_0})^2$, we get
$$
\frac12\frac d{dt}\|v(t)\|^2_{L^2}+\nu\(\frac\pi{x_0}\)^2\|v(t)\|^2_{L^2}-L\|v(t)\|^2_{L^2}\le0
$$
Fixing $x_0>0$ to be small enough that $\nu\(\frac\pi{x_0}\)^2>L$, applying the Gronwall inequality and using that $\|v(t)\|_{L^2}$ remains bounded as $t\to-\infty$, we conclude that $v(t)\equiv0$ for all $t\in\R$ and $x\in[0,x_0]$. Thus, the trajectories $u_1(t,x)$ and $u_2(t,x)$ coincide for all $t$ and all $x\in[0,x_0]$. Using now the arguments related to logarithmic convexity or Carleman type estimates which work for much more general class of equations, see \cite{RL}), we conclude that the trajectories $u_1$ and $u_2$ coincide. Thus, $F$ is separating on the attractor and therefore is asymptotically determining.
Being pedantic, we need to note that the functional $F(u)=u\big|_{x=x_0}$ is not defined on the phase space $H$, but on its proper subspace $C[0,L]$ (this is a typical situation for determining nodes, see \cite{OT08}). However, we have an instantaneous $H\to C[0,L]$ smoothing property, so if we start from $u_0\in H$, the value $F(u(t))$ will be defined for all $t>0$, so we just ignore this small inconsistency.
\end{example}
\begin{example}\label{Ex4.per} Let us consider the same equation \eqref{4.heat} on $[0,L]$, but endowed with {\it periodic} boundary conditions. In this case we do not have the condition $v(0)=0$, so the set $\Cal R$ of equilibria is naturally embedded in $\R^2$, not in $\R^1$, by the map $u\to (u\big|_{x=0},u'\big|_{x=0})$. Thus, we cannot expect that the determining dimension is one. Moreover, at least in the case when $g=const$, equation \eqref{4.heat} possess a spatial shift as a symmetry and, therefore, any nontrivial equilibrium generates the whole circle of equilibria. Since a circle cannot be homeomorphically embedded in $\R^1$, the determining dimension must be at least $2$. We claim that it is indeed $2$ and the determining functionals can be taken in the form:
\begin{equation}\label{4.2-det}
F_1(u):=u\big|_{x=0},\ \ F_2(u)=u\big|_{x=x_0}.
\end{equation}
Indeed, arguing exactly as in the previous example, we see that the system $\Cal F=\{F_1,F_2\}$ is asymptotically determining if $x_0>0$ is small enough.
\end{example}
The next natural example shows that the determining dimension may be finite and small even if the corresponding global attractor is infinite-dimensional.
\begin{example}\label{Ex4.inf} Let us consider the semilinear heat equation \eqref{4.heat} on the whole line $x\in\R$. The natural phase space for this problem is the so-called {\it uniformly-local} space
\begin{equation}\label{4.ul}
H=L^2_b(\R):=\{u\in L^2_{loc}(\R),\ \|u\|_{L^2_b}:=\sup_{x\in\R}\|u\|_{L^2(x,x+1)}<\infty\}.
\end{equation}
It is known that, under natural dissipativity assumption on $f\in C^1(\R)$ (e.g., $f(u)u\ge-C+\alpha u^2$ with $\alpha>0$), this equation generates a dissipative DS $S(t)$ in $H$ for every $g\in L^2_b(\R)$. Moreover, this DS possesses the so called {\it locally compact} global attractor $\Cal A$ that  is a bounded in $H$ and compact in $L^2_{loc}(\R)$ strictly invariant set which attracts bounded in $H$ sets in the topology of $L^2_{loc}(\R)$, see \cite{MZ08} and references therein. Note that in contrast to the case of bounded domains the compactness and attraction property in $H$ fail in general in the case of unbounded domains. It is also known that, at least in the case where equation \eqref{4.heat} possesses a spatially homogeneous exponentially unstable equilibrium (e.g., in the case where $g=0$ and $f(u)=u^3-u$), the box-counting dimension of $\Cal A$ is infinite (actually it contains submanifolds of any finite dimension), see \cite{MZ08}.
\par
Nevertheless, the system of linear functionals \eqref{4.2-det} remains determining for this equations by {\it exactly} the same reasons as in Examples \ref{Ex4.dir} and \ref{Ex4.per}. Thus,
$$
\dim_{det}(S(t),H)=2.
$$
One determining functional does not exist in general by the reasons explained in Example \ref{Ex4.per}.
\end{example}
Next example shows that in some extremely degenerate cases, the dimension of the attractor may coincide with the embedding dimension of the set of equilibria and with the number $N$ of determining Fourier modes from Proposition \ref{Prop2.modes}.
\begin{example}\label{Ex4.deg} Let the assumptions of Proposition \ref{Prop2.modes} hold and let for simplicity $\alpha=0$ (i.e., $f$ is globally Lipschitz as a map from $H$ to $H$ with the Lipschitz constant $L$).  Let us define  $N$ as a minimal natural number for which $L<\lambda_{N+1}$ be satisfied. Let us fix a smooth function $\phi:\R\to\R$, $\phi\in C_0^\infty(\R)$ in such a way that $|\phi'(x)|\le1$ for all $x$ and $\phi(x)=x$ for all $x\in[-1,1]$. Then, finally, take
\begin{equation}\label{4.deg}
f(u)=\sum_{n=1}^N\lambda_n\phi((u,e_n))e_n,\ \ g\equiv0.
\end{equation}
It is clear that $f$ is bounded and globally Lipschitz with Lipschitz constant $\lambda_N<L$, so equation \eqref{2.abs} possesses a global attractor $\Cal A$ whose fractal dimension is exactly $N$. Indeed, by the construction of $f$, $Q_Nf(u)\equiv0$ and, therefore, $\Cal A\subset P_NH=\R^n$. On the other hand, by the construction   of $f$, the set $\Cal R$ of equilibria contains a cube $[-1,1]^N\subset P_NH=\R^N$ and, therefore, $\dim_{emb}(\Cal R)=N$. On the other hand, by Proposition \ref{Prop2.modes}, the first $N$ Fourier modes are asymptotically determining and, therefore,
$$
\dim_{det}(S(t),H)=N.
$$
\end{example}
We now give an example of a non-dissipative and even {\it conservative}  system with determining dimension one.
\begin{example}\label{Ex4.wave} Let us consider the following 1D wave equation:
\begin{equation}\label{4.wave}
\partial^2_tu=\partial_x^2u,\ \ x\in(0,\pi),\ \ u\big|_{x=0}=u\big|_{x=\pi}=0,\ \ \xi_u\big|_{t=0}=\xi_0,
\end{equation}
where $\xi_u(t):=\{u(t),\Dt u(t)\}$. It is well-known that problem \eqref{4.wave} is well-posed in the energy phase space $E:=H^1_0(0,\pi)\times L^2(0,\pi)$ and the energy identity holds
\begin{equation}
\|\Dt u(t)\|_{L^2}^2+\|\partial_x u(t)\|^2_{L^2}=const.
\end{equation}
Moreover, the solution $u(t)$ can be found explicitly in terms of $\sin$-Fourier series:
\begin{equation}\label{4.sol}
u(t)=\sum_{n=1}^\infty (A_n\cos(nt)+B_n\sin(nt))\sin (nx),
\end{equation}
where $A_n=\frac2{\pi}(u(0),\sin (nx)$, $B_n=\frac2{n\pi}(u'(0),\sin(nx))$. Crucial for us is that the function \eqref{4.sol} is an {\it almost-periodic} function of time with values in $H^1_0$. Let us consider now a linear functional on $H=L^2(-\pi,\pi)$, i.e,
\begin{equation}
Fu=(l(x),u)=\sum_{n=1}^\infty l_nu_n,\ \
\end{equation}
where $l_n$ and $u_n$ are the Fourier coefficients of $l\in H$ and $u$ respectively. Then
\begin{equation}
Fu(t)=\sum_{n=1}^\infty l_nA_n\cos(nt)+l_nB_n\sin(nt)
\end{equation}
is a scalar almost periodic function. Since the Fourier coefficients of an almost-periodic function are uniquely determined by the function, we have
$$
Fu(t)\equiv0\ \ \text{ if and only if }\ \ l_nA_n=l_nB_n=0,
$$
see \cite{LZ} for details. Thus, if we take a generic function $l$ (for which $l_n\ne0$ for all $n\in\Bbb N$, $Fu(t)\equiv0$ will imply that $A_n=B_n=0$ and, therefore, $u(t)\equiv0$. Thus, $F$ is separating on the set of complete trajectories. It remains to note that, since any trajectory of \eqref{4.wave} is almost-periodic in $E$, the $\omega$-limit set of any trajectory exists and is compact in $E$.  Then, by Proposition \ref{Prop1.det} and Remark \ref{Rem1.strange}, $F$ is also asymptotically determining, so the determining dimension of this system is one.
\end{example}
We conclude this section by a bit counterintuitive example which shows that multiple instability can be removed by the feedback control based on one mode. Such examples are well-known  in the control theory (in a fact, they are particular cases of the famous Kalman rank theorem, see \cite{Kal,Kal1}), but have been somehow overseen in the theory of determining functionals. This example also gives an alternative explicit form for the extra terms in Proposition \ref{Prop2.assym} and \ref{Prop3.assym}.

\begin{example}\label{Ex4.feed} Let us consider the following linear problem
\begin{equation}\label{4.linheat}
\Dt u=\nu\partial_x^2 u-a(x)u,\ \ x\in(0,L),\ \ \ u\big|_{x=0}=u\big|_{x=L}=0,
\end{equation}
where $a(x)$ is a given function (belonging to $C[0,L]$ for simplicity). We would like to stabilize zero equilibrium of this equation by the rank-one linear feedback control given by the term $(l(x),u)w(x)$ for the properly chosen functions $l,w\in H:=L^2(0,L)$. In other words, we need to find $l$ and $w$ such that the equation
\begin{equation}\label{4.feed}
\Dt v=\nu\partial_x^2v-a(x)v+(l,v)w, \ \ \ v\big|_{x=0}=v\big|_{x=L}=0
\end{equation}
become exponentially stable. Let $A=\nu\partial_x^2-a(x)$ be a self-adjoint operator in $H=L^2(0,L)$ endowed by  homogeneous Dirichlet boundary conditions and let $\mu_n$ and $\xi_n$ be the corresponding eigenvalues and eigenvectors enumerated in the non-increasing order. Then, since $\mu_n\to-\infty$ as $n\to\infty$, we have only finite unstable eigenmodes and let $N$ be the number of such modes. Thus, the higher modes are already stable, so we need not to stabilize them and we may assume that $l,w\in P_NH$. This reduces our problem to the finite dimensional problem in $\R^N=P_NH$:
\begin{equation}\label{4.fin}
\Dt v=A_Nv+(l,v)w,\ \ A_N=\operatorname{diag}\{\mu_1,\cdots,\mu_N\},\ \ l,v,w\in\R^N.
\end{equation}
Crucial for us is that the eigenvalues of $A$ are all {\it simple} (here we have essentially used that our problem is scalar and has only one spatial dimension), therefore, the matrix $A_N$ possesses a cyclic vector $e=e_1$. Let us consider the corresponding  cyclic base $\{e_1,\cdots,e_N\}$ in $\R^N$:
$$
Ae_1=e_2,\cdots, Ae_{N-1}=e_N,\ \ Ae_N=\alpha_1e_1+\cdots\alpha_Ne_N.
$$
In this base, the matrix $A_N$ reads
\begin{equation}
A_N=\(\begin{matrix}0&1&0&0&\cdots&0\\
                                   0&0&1&0&\cdots&0\\
\vdots&\vdots&\vdots&\vdots&\cdots&\vdots\\
                                   0&0&0&0&\cdots&1\\
                                   \alpha_1&\alpha_2&\alpha_3&\alpha_4&\cdots&\alpha_N
\end{matrix}\)
\end{equation}
Note also that the coefficients $\alpha_1,\cdots,\alpha_N$ are nothing more than the coefficients of the characteristic polynomial of the matrix $A_N$. Thus, controlling these coefficients, we control the spectrum of the matrix $A_N$. Let us now take $w=e_N$. Then, the matrix which corresponds to the rank-one operator $(l,u)e_N$ reads
\begin{equation}
\(\begin{matrix}0&0&0&0&\cdots&0\\
                                   0&0&0&0&\cdots&0\\
\vdots&\vdots&\vdots&\vdots&\cdots&\vdots\\
                                   0&0&0&0&\cdots&0\\
                                   l_1&l_2&l_3&l_4&\cdots&l_N
\end{matrix}\).
\end{equation}
Thus, choosing the coefficients $l_1,\cdots,l_N$ in a proper way, we can change the characteristic polynomial of $A_N+(l,\cdot)e_N$ arbitrarily. In particular, we can make all its roots negative and this gives us the desired feedback control.
\end{example}

\section{Open problems}\label{s5}
In this concluding section we briefly discuss some interesting open problems related to the theory of determining functionals. We start with a natural question related to Navier-Stokes equations.
\begin{problem}\label{Pr5.pressure} It looks natural and interesting to study the non-linear determining functionals satisfying some extra restrictions. For instance, let us consider the Navier-Stokes problem:
\begin{equation}\label{5.NS}
\Dt u+(u,\Nx)u+\Nx p-\Dx u=g,\ \ \divv u=0
\end{equation}
in a bounded domain $\Omega$. Here $u$ is an unknown velocity field and $p$ is an unknown pressure. The following question is of big both theoretical and applied interests:
\par
{\it Is it possible to find determining functionals depending  on pressure $p$ only?}
\par
Note that the pressure can be expressed through velocity via
$$
\Dx p=-\sum_{i,j}\partial_{x_i}\partial_{x_j}(u_iu_j)+\operatorname{div} g
$$
and, therefore, even if the desired functionals will be linear in pressure, they can be expressed as {\it non-linear} (quadratic) ones in terms of the velocity vector field. This problem is one of possible extra motivations to study the non-linear determining functionals.
\end{problem}
Next problem is related to the actual smoothness of the reduction to DDEs.
\begin{problem}\label{Pr5.smo} We recall that the non-linearity $\bar\Theta$ in the constructed delayed ODE \eqref{3.Z-ode} is a priori continuous only although we expect that it general it is at least H\"older continuous (to verify this one should use H\"older Mane projection theorem together with the appropriate version of Takens theorem). The situation with further smoothness of $\bar\Theta$ is more delicate. Indeed, the original finite-dimensional version of Takens  theorem, see \cite{Tak} deals with smooth maps and diffeomorphisms, however, the infinite-dimensional version is usually verified using the H\"older Mane projection theorem as an intermediate step and this leads to a drastic loss of smoothness making everything H\"older continuous only, see \cite{SYC91} or \cite{R11}. It is not clear how essential this loss of smoothness is and whether or not the problem can be overcome using more sophisticated techniques and allowing the infinite delay.
\par
It also worth mentioning that the nonlinearity $\Phi_0$ in the particular case  of the reduction related to Fourier modes is smooth (its smoothness is restricted by the smoothness of the map $\Phi_0$ only) which gives a hope that the smoothness problem can be overcome in a general case as well. On the other hand, as known, for the "true" finite-dimensional reduction related to Man\'e projection theorem, this loss of smoothness is crucial and, to the best of our knowledge, can be overcome only in the case where the considered DS possesses an IM, see \cite{R11, Z14} and references therein.
\end{problem}
Our final open problem is related to application to data assimilation, namely, with  explicit constructions of recovering equations \eqref{3.as} in the cases where the explicit construction of determining functionals is available.
\begin{problem}\label{Pr5.Edr} We know from Example \ref{Ex4.dir} that reaction-diffusion equation \eqref{4.heat} possesses a single determining functional $F(u):=u\big|_{x=x_0}$, where $x_0>0$ is small enough. Therefore, according to Proposition \ref{Prop3.assym}, any trajectory $u(t)$ on the attractor can be recovered by its values at point $x=x_0$ by solving equation \eqref{3.as}. However, the corresponding function $\Theta_N$ involved into this equation is rather complicated and requires us to compute the attractor $\Cal A$ as well as to verify that $F$ satisfies  the Takens theorem before we will be able to use equations \eqref{3.as}. On the other hand, as shown in \cite{AT14}, one can use a very simple explicit form of function $\Theta_N$ (similar to what is used in \eqref{2.as}) if we take sufficiently many nodes $0<x_1<x_2<\cdots<x_N<L$ and the corresponding functionals. Thus, a natural question arises here:
\par
{\it Is it possible to find a simpler form of equations \eqref{3.as} which do not require to compute the attractor $\Cal A$?}
\par
Example \ref{Ex4.feed} gives a partial answer to this question for the case of {\it linear} 1D parabolic equation. However, it is not clear how to extend it to the non-linear case of equation \eqref{4.heat} and it would be also nice to use the observation functional in the form $F(u)=u\big|_{x=x_0}$.
\par
Revising Example \ref{Ex4.dir}, we see that this problem is closely related to the regularization of the ill posed (overdetermined) boundary value problem:
$$
\Dt w=\nu\partial_x^2w-l(t)w,\ \ w\big|_{x=x_0}=\partial_x w\big|_{x=x_0}=w\big|_{x=L}=0.
$$
We expect that, regularizing this problem properly and using the Carleman type estimates
on the interval $(x_0,L)$ (see e.g. \cite{RL}), one should be able to construct the approximating solution $v(t,x)$ in such a way that $v(t,x)\to u(t,x)$ as $t\to\infty$ exponentially fast on the whole interval $x\in[0,L]$. We will return to this topic  somewhere else.
\end{problem}

\end{document}